\newtheorem{Theorem}[equation]{Theorem}
\newtheorem{Corollary}[equation]{Corollary}
\newtheorem{Lemma}[equation]{Lemma}
\newtheorem{Proposition}[equation]{Proposition}
\theoremstyle{definition}
\numberwithin{equation}{section}
\newcommand{\C}{{\mathbb C}}
\begin{document}
	
	\title[Generalized flag variety]{Blow-up of a generalized flag variety}
	
	\author[I. Biswas]{Indranil Biswas}
	
	\address{Department of Mathematics, Shiv Nadar University, NH91, Tehsil Dadri, Greater Noida, Uttar Pradesh, 201314, India}
	
	\email{indranil.biswas@snu.edu.in, indranil29@gmail.com}
	
	\author[P. Saha]{Pinakinath Saha}
	
	\address{Department of Mathematics, Indian Institute of Technology, Delhi, Hauz Khas, New Delhi-110016, India.}
	
	\email{pinakinath@iitd.ac.in}

	\subjclass[2010]{14C20, 14M15, 14L30.}
	
	\keywords{Flag variety, Schubert variety, nef bundle, ample bundle.}	
	\thanks{Corresponding Author: P.S}
	
	\begin{abstract}
		Let $G$ be a connected simply connected semisimple complex algebraic group and $P\, \subset\, G$
		a parabolic subgroup. We give a necessary and sufficient condition for a line bundle --- on the blow-up
		of the generalized flag variety $G/P$ along a smooth Schubert variety --- to be 
		ample (respectively, nef). Furthermore, it is shown that every such nef line bundle is actually globally generated. As
		a consequence, we are able to describe when such a blow-up is (weak) Fano.
	\end{abstract}
	
	\maketitle
	\tableofcontents	
	
	\section{Introduction}\label{S1}
	
To a complete algebraic variety $X$, one associates the group $N_{1}(X)$ of all one-cycles modulo numerical equivalence; this is a free abelian group of finite 
rank (cf. \cite[Section 1.4.C]{Lazarsfeld}). The corresponding real vector space ${N_{1}(X)}_{\mathbb{R}}\,=\,
N_{1}(X)\otimes_{\mathbb Z} {\mathbb R}$ contains the cone of effective one-cycles, denoted by 
${\rm NE}(X)$, which is the convex cone generated by classes of all closed irreducible curves in $X$. There has been considerable interest in studying various cones 
of positive curves and divisors on algebraic varieties. However, it is generally difficult to describe the entire cone ${\rm NE}(X)$, or its dual, which is the 
nef cone.
	
	Let $G$ be a connected, simply connected, semisimple affine algebraic group over $\mathbb{C}$.
	Take a parabolic subgroup $P\, \subset\, G$. The quotient $X\,:=\,G/P$ is the generalized flag variety associated
	to $(G,\, P)$. Let $Z\, \subset\, X$ be a smooth Schubert variety (cf. Section \ref{S2}) of codimension $c$, with $c\,\ge 2$. Consider the blow-up
	\[{\rm Bl}_Z\ :\ {\rm Bl}_{Z}X\ \longrightarrow\
	X\]
	of $X$ along $Z$. The exceptional divisor for the above map ${\rm Bl}_Z$ is denoted by $E_Z$. Note 
	that ${\rm Bl}_{Z}X$ is also smooth because both $Z$ and $X$ are smooth.
	
	A smooth projective variety $X$ is called {\it Fano} (respectively, {\it weak-Fano}) if its anticanonical bundle $-K_{X}$ is ample (respectively, nef and big).
	
	This paper arose from the following natural question: When is the blow-up ${\rm Bl}_{Z}X$ of $X\,=\,G/P$ along $Z$ 
	Fano or (weak) Fano?
	
	We prove the following:
	
	\begin{enumerate}
		\item The nef cone and the ample cone of ${\rm Bl}_{Z}X$ are computed
		(cf. Theorem~\ref{Theorem:blow-up} and Corollary~\ref{cor: ample}).
		\vspace{.1cm}
		
		\item A sufficient condition is given under which the anticanonical bundle of ${\rm Bl}_{Z}X$ is big
		(cf. Theorem~\ref{Theorem: Big}).
		\vspace{.1cm}
		
		\item We describe when ${\rm Bl}_{Z}X$ is (weak) Fano (cf. Corollary~\ref{cor: Fano}).
		\vspace{.1cm}

		\item More elaborate descriptions are given for the Grassmannian and its generalization, namely
		the cominuscule Grassmannian (cf. Proposition \ref{cor: Grassmannian}, 
		Proposition~\ref{prop: cominuscule}, and Corollary~\ref{cor: cominuscule}).
	\end{enumerate}
	
	In our setting, the nef cone is polyhedral and admits an explicit description. Moreover, the cone of globally 
	generated line bundles coincides with the nef cone --- a phenomenon that does not hold in general.
	
	In a recent work \cite{hu2025mirror}, Hu, Ke, Li and Song investigated the case where $Z$ is a smooth Schubert 
	subvariety of the Grassmannian ${\rm Gr}(r,n)$, and established that the blow-up $ {\rm Bl}_Z {\rm Gr}(r,n)$ is 
	Fano if and only if the codimension of $Z$ is at most $n$. Using root system–theoretic techniques, we recover 
	their result \cite[Theorem 1.1]{hu2025mirror} and further generalize it to the setting of arbitrary generalized 
	flag varieties (cf.~Corollary~\ref{cor: grassFano} and ~Corollary~\ref{cor: grassweakFano}).
	
	\section{Notation and Preliminaries}\label{S2}
	
	This section recalls some notation and preliminaries. For details on matters related to algebraic geometry, algebraic groups, 
	Lie algebras, we refer to \cite{Lazarsfeld}, \cite{Hartshorne}, \cite{Borel}, \cite{Billey-Lakshmibai}, \cite{BrionKumar}, and 
	\cite{Humphreys72}.
	
	Fix a maximal torus $T$ of 
	$G.$ Let $W\,=\,N_{G}(T)/T$ be the
	Weyl group of $G$ with respect to $T.$ Denote by $R$ the set of roots of $G$ with respect to $T$. Let $B^{+}$
	be a Borel subgroup of $G$ containing $T$. The Borel subgroup of $G$ opposite to $B^{+}$, determined by $T$, is
	denoted by $B$; in other words, we have $B\,=\,n_{0}B^{+}n_{0}^{-1}$, where $n_{0}$ 
	is a representative in $N_{G}(T)$ of the longest element $w_{0}$ of $W$. 
	
	Let $R^{+}\,\subset\, R$ be the set
	of all positive roots of $G$ with respect to the Borel subgroup $B^{+}$, and
	$R^{-} \,:=\, -R^{+}$ is the set of negative roots.
	Let $S \,=\, \{\alpha_1,\,\cdots,\,\alpha_{\ell}\}$ denote the set of simple roots in $R^{+},$
	where $\ell$ is the rank of $G.$ The simple reflection in $W$ corresponding to $\alpha_i$ is denoted by $s_{i}.$
	
	Denote by $X^{*}(T)$ (respectively, $X_{*}(T)$) the group of the characters (respectively, cocharacters) of $T.$
	We have a natural perfect bilinear pairing
	\begin{equation}\label{ep}
		\langle \cdot,\, \cdot \rangle\,:\, X^{*}(T)\times X_{*}(T)\,
		\longrightarrow\, \mathbb{Z}.
	\end{equation}
	For a root $\alpha\,\in\, R,$ denote
	the corresponding coroot in ${X_{*}(T)}$ by $\alpha^{\vee}$. Let $X^{*}(T)^+$ denote the set of dominant characters
	of $T$ with respect to $B^{+}$ (i.e., $\lambda\, \in\, X^{*}(T)$ such that $\langle \lambda,\, \alpha_{i}^{\vee}
	\rangle\,\in\, \mathbb{Z}_{\ge 0}$ for all $1\,\le\, i\,\le\, \ell$). For any simple root $\alpha_{i}$,
	denote the fundamental weight corresponding to $\alpha_{i}$ by $\varpi_{i},$ in other words, $\langle \varpi_{i},\, 
	\alpha_{j}^{\vee}\rangle\,=\,\delta_{ij},$ where $\delta_{ij}$ denotes the Kronecker delta. The half sum of all 
	positive roots of $G$ will be denoted by $\rho$.
	
	Fix a parabolic subgroup $P$ of $G$ containing $B$. Define $S_{P}\,:=\,\{\alpha\,\in\, S\,\,\big\vert\,\, 
	U_\alpha\,\subset\, P\}$. Let $W_{P}$ be the Weyl subgroup of $W$ generated by $s_{\alpha}$ for all 
	$\alpha\,\in\, S_{P}$. For each $\beta$ in $R$, there exists a unique connected $T$--stable subgroup 
	$U_{\beta}$ of $G$. Then $P$ is the subgroup of $G$ generated by $B$ and all the root subgroups $U_{\alpha}$,
	$\alpha\,\in\, S_P$.
	
	Let $X\,:=\,G/P$ be the generalized flag variety associated to $(G,\, P)$. The (opposite) Schubert varieties in
	$X\,=\,G/P$ are parameterized by the set $W^P$ of all the minimal left coset 
	representatives of $W/W_P$ in $W$. The Schubert variety corresponding to $w\,\in\, W^{P}$ is 
	\[\overline{BwP}/P\ \subseteq\ G/P.\]
	The opposite Schubert variety corresponding to $w\,\in\, W^{P}$ is 
	\[\overline{B^{+}wP}/P\ \subseteq\ G/P.\]
	
	The quotient map $$p\ :\ G\ \longrightarrow\ X\ =\ G/P$$ makes $G$ a principal $P$--bundle over $X$. Denote by
	$X^{*}(P)$ the group of all characters of $P$. Note that 
	$$X^{*}(P)\ =\ \{\lambda\,\in\, X^{*}(T) \,\,\big\vert\,\, \langle \lambda,\, {\alpha}^{\vee}\rangle\,
	=\, 0 \ \, \forall\ \, \alpha \,\in\, S_P\}.$$
	
	For any $\lambda\,\in\, X^{*}(P)$, there is a natural linear action of $P$ on the one dimensional vector space
	$\mathbb{C}_{\lambda}\,:=\,\mathbb{C}$, namely,
\begin{equation}\label{z1}
p\cdot x\ =\ \lambda(p)x
\end{equation}
for all $p\,\in\, P$ and $x\,\in\, \mathbb{C}$. 
	
	For any $\lambda\,\in\, X^{*}(P)$, set $L_{\lambda}\,:=\, (G\times \mathbb{C}_{\lambda})\big/\sim$ with the
	equivalence relation $\sim$ being defined by $(g,\, x)\,\sim\, (gp^{-1},\, px)$, for $g\,\in\, G$,\,
	$p\,\in\, P$ and $x\,\in\, \mathbb{C}$ (see \eqref{z1}). Then ${L}_{\lambda}$ is the total space of the line bundle, over $X$,
	associated to the principal $P$--bundle $p$ for the character $\mathbb{C}_{\lambda}$ of $P$. Thus we obtain a map
	\begin{equation}\label{eq: picard}
		L\ :\ X^{*}(P)\ \xrightarrow{\,\,\,\sim\,\,\,}\ {\rm Pic}(X)
	\end{equation}
	that sends any $\lambda\, \in\, X^{*}(P)$ to the above line bundle $L_{\lambda}$ constructed using $\lambda$.
	In fact, the homomorphism $L$ in \eqref{eq: picard} is an isomorphism. Consequently, ${\rm Pic}(X)$ is a free
	finitely generated abelian group. Moreover, ${\rm Pic}(X)$ is generated by the globally generated line
	bundles $L_{\varpi_{\alpha}}$, where $\alpha\,\in\, S\setminus S_{P}$.
	
	On the other hand, for $\alpha\,\in\, S\setminus S_{P}$, consider the opposite Schubert divisors
	\[D_{\alpha}\ :=\, \overline{B^{+}s_{\alpha}P}/P\ \subset\ G/P\]
	in $G/P$. We have $L_{\varpi_{\alpha}}\,=\, \mathcal{O}_{X}(D_{\alpha})$. Under the
	isomorphism $L$ in \eqref{eq: picard},
	\[\mathcal{O}_{X}(1)\ =\ \mathcal{O}_{X}(\sum_{\alpha\in S\setminus S_{P}}^{}D_{\alpha}),\] which is, in fact, a
	very ample line bundle on $X$.
	
	For $\alpha\,\in\, S\setminus S_{P}$, consider the Schubert curves $C_{\alpha}\,:=\,\overline{Bs_{\alpha}P/P}$. Then
	the cone of effective one cycle is freely generated by $\{C_{\alpha}\}_{\alpha\in S\setminus S_{P}}$.
	We have the intersection pairing 
	\begin{equation}\label{intersection}
		D_{\alpha} \cdot C_{\beta}\ =\
		\begin{cases}
			1 &\ \text{ if } \alpha \,=\, \beta, \\
			0 &\ \text{ if } \alpha \,\neq\, \beta.
		\end{cases}	
	\end{equation}
	
	\section{Nef cone of the blow-up }\label{S3}
	
	Take a smooth Schubert variety $Z\,\subset\, X$ of codimension $c$, with $c\,\ge\, 2$. Let
	\[{\rm Bl}_Z\ :\ {\rm Bl}_{Z}X\ \longrightarrow\
	X\] be the blow-up map of $X$ along $Z$, with $E_Z\, \subset\, {\rm Bl}_{Z}X$ being the exceptional divisor for the map ${\rm Bl}_Z$.
	
	The canonical line bundle of $X$ will be denoted by $K_X$. For simplicity, we also denote a canonical divisor of $X$ by 
	$K_{X}$.
	
	The following lemma relates the canonical bundle $K_{{\rm Bl}_{Z}X}$ of ${\rm Bl}_{Z}X$ with $K_X$.
	
	\begin{Lemma}[{cf. \cite[Chap. II, Ex. 8.5.]{Hartshorne}}] \label{lemma: canonical}
		Let $X$, ${\rm Bl}_ZX$ and ${\rm Bl}_Z$ be as above. Then
		\[K_{{\rm Bl}_Z X}\,\ =\,\ ({\rm Bl}_Z)^{*} K_{X}\otimes \mathcal{O}_{{\rm Bl}_Z X}((c-1)E_Z),\]
		where $c$ is the codimension of $Z$ in $X$.
	\end{Lemma}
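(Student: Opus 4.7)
The plan is to reduce the statement to a local computation via an explicit Jacobian determinant. Since both $X$ and $Z$ are smooth, the inclusion $Z\hookrightarrow X$ is a regular embedding of codimension $c$, so around every point of $Z$ there exist \'etale local coordinates $x_{1},\ldots,x_{n}$ on $X$ in which $Z$ is cut out by the ideal $(x_{1},\ldots,x_{c})$. The asserted identity of line bundles is local on $X$, so I would work entirely in such a chart.

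Next, I would use the standard description of the blow-up in these coordinates: ${\rm Bl}_{Z}X$ is covered by $c$ affine charts indexed by $i\in\{1,\ldots,c\}$, and in the $i$-th chart, with coordinates $y_{1},\ldots,y_{n}$, the morphism ${\rm Bl}_{Z}$ is given by $x_{i}=y_{i}$, $x_{j}=y_{i}y_{j}$ for $1\le j\le c$ with $j\neq i$, and $x_{k}=y_{k}$ for $k>c$. In this chart, the exceptional divisor $E_{Z}$ is defined by $y_{i}=0$. A short Jacobian determinant computation then yields $({\rm Bl}_{Z})^{*}(dx_{1}\wedge\cdots\wedge dx_{n}) \,=\, y_{i}^{c-1}\,dy_{1}\wedge\cdots\wedge dy_{n}$, which is the divisor-level identity $K_{{\rm Bl}_{Z}X} - ({\rm Bl}_{Z})^{*}K_{X} \,=\, (c-1)E_{Z}$; rearranging and passing back to line bundles gives the formula in the statement.

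The only point requiring care, and the main potential obstacle, is to ensure that the local identities patch to a global one. This follows from the intrinsic identification of $E_{Z}$ with the projectivized normal bundle $\mathbb{P}(N_{Z/X})$, which ensures that the multiplicity $c-1$ of the relative dualizing sheaf $K_{{\rm Bl}_{Z}X/X}$ along $E_{Z}$ is independent of the chosen chart. Equivalently, one can invoke the general formula for the canonical bundle of a smooth blow-up along a smooth center, which is exactly the content of Hartshorne's Exercise II.8.5 cited in the statement.
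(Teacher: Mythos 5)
Your argument is correct. Note, however, that the paper offers no proof of this lemma at all: it is stated as a standard fact with the citation to Hartshorne, Chap.\ II, Ex.\ 8.5, so there is no internal argument to compare against. Your local computation is precisely the standard proof behind that citation: \'etale local coordinates in which $Z=V(x_1,\dots,x_c)$, the explicit charts of the blow-up, and the Jacobian determinant $y_i^{c-1}$ giving the coefficient $c-1$ on $E_Z$. One small simplification of your final ``patching'' step: since ${\rm Bl}_Z$ is an isomorphism off $Z$, the divisor $K_{{\rm Bl}_ZX}-({\rm Bl}_Z)^*K_X$ is automatically supported on $E_Z$, and $E_Z$ is irreducible (it is a $\mathbb{P}^{c-1}$-bundle over the irreducible Schubert variety $Z$), so its coefficient is a single integer determined by any one chart --- no appeal to the intrinsic description of $E_Z$ as a projectivized normal bundle is needed. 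Do make sure to record that pulling back a local generator of $K_X$ along an \'etale coordinate chart is harmless because the canonical bundle is compatible with \'etale base change; with that said, the proof is complete.
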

	
	\begin{Corollary}\label{Cor: anti-cano}
		The anticanonical line bundle of ${\rm Bl}_{Z}X$ is given by
		\[K^{-1}_{{\rm Bl}_{Z} X}\,\ =\,\ ({\rm Bl}_{Z})^* K^{-1}_{X}\otimes \mathcal{O}_{{\rm Bl}_{Z}X}(-(c-1)E_Z),\]
		where $c$ is the codimension of $Z$ in $X$.
	\end{Corollary}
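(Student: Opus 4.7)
The statement is an immediate consequence of Lemma~\ref{lemma: canonical}, so the plan is simply to dualize the formula given there. Concretely, I would start from
\[
K_{{\rm Bl}_Z X}\ =\ ({\rm Bl}_Z)^{*}K_{X}\otimes \mathcal{O}_{{\rm Bl}_Z X}((c-1)E_Z),
\]
and apply the functor $\mathcal{L}\,\mapsto\, \mathcal{L}^{-1}$ to both sides.

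Using that tensor product is taken to tensor product of inverses, that pullback commutes with taking duals (so $(({\rm Bl}_Z)^{*}K_X)^{-1}\,=\,({\rm Bl}_Z)^{*}K_X^{-1}$), and that $\mathcal{O}_{{\rm Bl}_Z X}((c-1)E_Z)^{-1}\,=\,\mathcal{O}_{{\rm Bl}_Z X}(-(c-1)E_Z)$, one directly obtains
\[
K^{-1}_{{\rm Bl}_{Z} X}\ =\ ({\rm Bl}_{Z})^{*} K^{-1}_{X}\otimes \mathcal{O}_{{\rm Bl}_{Z}X}(-(c-1)E_Z),
\]
which is the claimed formula.

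There is essentially no obstacle here: the corollary is a purely formal reformulation of the preceding lemma. The only thing to keep track of is signs/inversions, and writing the relation in additive (divisor) notation, namely $-K_{{\rm Bl}_Z X}\,=\,({\rm Bl}_Z)^{*}(-K_X)-(c-1)E_Z$, makes the bookkeeping transparent. No further input is needed beyond Lemma~\ref{lemma: canonical}.
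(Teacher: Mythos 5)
Your proof is correct and matches the paper's intent exactly: the corollary is stated without proof precisely because it is the formal dualization of Lemma~\ref{lemma: canonical}, which is what you carry out. Nothing further is needed.
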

	
	Recall that $\mathcal{O}_{X}(1)\,:=\,\mathcal{O}_X(\sum_{\alpha\in S\setminus S_{P}}D_{\alpha})$.
	Let $H\,=\,{\rm Bl}_{Z}^{*}\mathcal{O}_{X}(1)$.
	
	\begin{Theorem}\label{Theorem:blow-up} 
		Let ${\rm Bl}_{Z}X$ be the blow-up of $X$ along $Z$. Then
		the line bundles ${\rm Bl}_{Z}^*D_\alpha,$ with $\alpha\,\in\, S\setminus S_P$,
		and $H - E_{Z}$ are globally generated over ${\rm Bl}_{Z}X$. Moreover, the nef cone
		${\rm Nef}({\rm Bl}_{Z} X)$ is generated by $\{{\rm Bl}_{Z}^*D_\alpha\}_{\alpha\in S\setminus S_P}\cup
		\{H - E_{Z}\}.$ Its dual ${\rm NE}({\rm Bl}_{Z} X),$ namely the Mori cone of curves, is generated
		by the classes of $\{\widetilde{C}_\alpha\}_{\alpha\in S\setminus S_{P}}$ and $e$,
		where $\widetilde{C}_\alpha \,:=\, ({\rm Bl}_{Z})^{*}(C_\alpha)-e$ with $e$ being the class of a line in the
		fiber of the projection $E_Z\, \longrightarrow\, Z$. Moreover, the cone of globally
		generated line bundles on ${\rm Bl}_{Z}X$ coincides with the cone ${\rm Nef}({\rm Bl}_{Z}X)$. 
	\end{Theorem}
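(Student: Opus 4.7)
My strategy has two parts: verify that the proposed ray generators are globally generated, and then compute the Mori cone by hand so that the nef cone description emerges by duality.

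For global generation, each ${\rm Bl}_Z^* D_\alpha$ is the pullback of the globally generated line bundle $L_{\varpi_\alpha}$ on $G/P$, so nothing needs to be done there. The substantive assertion is that $H - E_Z$ is globally generated. By the universal property of the blow-up, sections of $H - E_Z$ on ${\rm Bl}_Z X$ are identified with sections of $\mathcal{I}_Z \otimes \mathcal{O}_X(1)$ on $X$, where $\mathcal{I}_Z$ is the ideal sheaf of $Z$; accordingly, $H - E_Z$ is globally generated exactly when this twisted ideal sheaf is generated by its global sections. I would establish this using the Schubert-variety structure of $Z$: the restriction $H^0(X, \mathcal{O}_X(1)) \twoheadrightarrow H^0(Z, \mathcal{O}_X(1)|_Z)$ is surjective (a classical property of Schubert varieties, essentially due to Ramanathan), so the kernel $H^0(X, \mathcal{I}_Z(1))$ is large, and combining this with the smoothness of $Z$ shows that these sections cut out $Z$ scheme-theoretically and generate $\mathcal{I}_Z(1)$ in a neighbourhood of $Z$ (equivalently, generate the conormal sheaf tensored with $\mathcal{O}_X(1)|_Z$).

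For the Mori cone, any irreducible curve $\Gamma \subset {\rm Bl}_Z X$ pushes forward under ${\rm Bl}_Z$ to an effective one-cycle on $X$, hence a non-negative combination $\sum a_\alpha [C_\alpha]$ of Schubert curves. Subtracting $\sum a_\alpha [\widetilde{C}_\alpha]$ leaves a class supported on $E_Z$, which must be a non-negative multiple of the fiber class $e$; this gives the claimed description of ${\rm NE}({\rm Bl}_Z X)$. Duality then becomes transparent from the intersection numbers
\[
{\rm Bl}_Z^* D_\beta \cdot \widetilde{C}_\alpha \,=\, \delta_{\alpha\beta}, \quad {\rm Bl}_Z^* D_\beta \cdot e \,=\, 0, \quad (H - E_Z)\cdot \widetilde{C}_\alpha \,=\, 0, \quad (H - E_Z)\cdot e \,=\, 1,
\]
which follow from \eqref{intersection}, the projection formula, and the standard identity $E_Z \cdot e = -1$. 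These identities show that $\{{\rm Bl}_Z^* D_\alpha\}_\alpha \cup \{H - E_Z\}$ is the basis of ${\rm Pic}({\rm Bl}_Z X) \otimes_{\mathbb{Z}} \mathbb{R}$ dual to $\{\widetilde{C}_\alpha\}_\alpha \cup \{e\}$, so the cone they generate is the dual of ${\rm NE}({\rm Bl}_Z X)$, namely ${\rm Nef}({\rm Bl}_Z X)$.

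Finally, since every ray generator of this cone is globally generated (from the first part) and non-negative integer combinations of globally generated line bundles remain globally generated, the cone of globally generated line bundles also coincides with ${\rm Nef}({\rm Bl}_Z X)$. The main obstacle is the global generation of $H - E_Z$: proving that the twisted ideal sheaf $\mathcal{I}_Z \otimes \mathcal{O}_X(1)$ is globally generated. This is where the Schubert-variety structure and the smoothness hypothesis on $Z$ are genuinely used; the Mori-cone computation, by contrast, reduces to a standard blow-up argument once the Mori cone of $G/P$ is in hand.
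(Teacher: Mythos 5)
Your architecture is essentially the paper's, run in the opposite direction: both proofs hinge on the fact that $\{{\rm Bl}_Z^*D_\alpha\}_{\alpha}\cup\{H-E_Z\}$ and $\{\widetilde{C}_\alpha\}_{\alpha}\cup\{e\}$ are dual bases under the intersection pairing, so determining either cone determines the other. The paper pins down ${\rm Nef}({\rm Bl}_ZX)$ directly (pairing an arbitrary nef class against $e$ and $\widetilde{C}_\alpha$) and then dualizes to get ${\rm NE}$; you propose to pin down ${\rm NE}$ directly and dualize to get ${\rm Nef}$. The real problem is the step you yourself flag as the crux: global generation of $H-E_Z$, equivalently of $\mathcal{I}_Z\otimes\mathcal{O}_X(1)$. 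Surjectivity of $H^0(X,\mathcal{O}_X(1))\rightarrow H^0(Z,\mathcal{O}_X(1)\vert_Z)$ together with smoothness of $Z$ does \emph{not} imply that the sections of $\mathcal{I}_Z\otimes\mathcal{O}_X(1)$ cut out $Z$ or generate the sheaf. For instance, if $Z$ is two points in $\mathbb{P}^2$ and $L=\mathcal{O}(1)$, the restriction map is surjective and $Z$ is smooth, yet $H^0(\mathcal{I}_Z(1))$ is spanned by the line through the two points, which does not cut out $Z$; so $\mathcal{I}_Z(1)$ is not globally generated. The input actually needed is a theorem of standard monomial theory (\cite[Theorem 6]{Littelmann}, used together with the Mehta--Ramanathan surjectivity): for a Schubert variety $Z$ in $G/P$ and an ample line bundle $L$, the sheaf $L\otimes\mathcal{I}_Z$ is globally generated and $Z$ is scheme-theoretically cut out by its sections. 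It is the Schubert structure, not the smoothness, of $Z$ that drives this; smoothness only guarantees that ${\rm Bl}_ZX$ is smooth. Without importing (or reproving) such a statement, your argument does not close.

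A secondary gap sits in your Mori-cone computation. Writing $[\Gamma]=\sum_\alpha a_\alpha\widetilde{C}_\alpha+\mu e$, the residual coefficient is $\mu=(H-E_Z)\cdot\Gamma$; the difference of numerical classes is not ``supported on $E_Z$,'' and in any case effective curves inside $E_Z$ need not be multiples of $e$ (a section of $E_Z\rightarrow Z$ over a curve in $Z$ has nonzero ${\rm Bl}_Z^*C_\alpha$ components). The non-negativity of $\mu$ is exactly the nefness of $H-E_Z$ evaluated on $\Gamma$, so this part of your argument must explicitly invoke the first part, and curves contained in $E_Z$ should be treated as a separate case. Once these two points are repaired, the duality step and the final assertion about the cone of globally generated line bundles go through as you describe.
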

	
	\begin{proof}
		The pullback ${\rm Bl}_{Z}^{*}(D_{\alpha})$ is globally generated for every $\alpha\, \in\, S\setminus S_P$,
		because $D_{\alpha}$ is globally generated. Consider the short exact sequence of sheaves on $X$
		\[0\,\longrightarrow\, \mathcal{O}_X(-Z)\,\longrightarrow\, \mathcal{O}_{X}
		\,\longrightarrow\, \mathcal{O}_{X}\big\vert_{Z}\,\longrightarrow\, 0 .\] Tensoring it with $\mathcal{O}_{X}(1)$,
		we have the exact sequence
		\begin{equation}\label{f1}
			0\,\longrightarrow\, \mathcal{O}_{X}(1)\otimes \mathcal{O}_X(-Z)\,\longrightarrow\, \mathcal{O}_{X}(1)
			\,\longrightarrow\, \mathcal{O}_{X}(1)\big\vert_{Z}\,\longrightarrow\, 0 .
		\end{equation}
		{}From the very ampleness of $\mathcal{O}_{X}(1)$ it follows that the restriction map
		\[H^0(X,\, \mathcal{O}_{X}(1))\ \longrightarrow\ H^0(Z,\, \mathcal{O}_{X}(1)\big\vert_{Z})\]
		is surjective \cite[Theorem 3]{Mehta-Ramanathan}. Thus from \eqref{f1} there is the following short exact sequence
		\begin{equation}\label{f2}
			0\, \longrightarrow\, H^0(X,\, \mathcal{O}_{X}(1)\otimes \mathcal{O}_{X}(-Z))
			\, \longrightarrow\, H^0(X,\, \mathcal{O}_{X}(1))\, \longrightarrow\, H^0(Z,\,
			\mathcal{O}_{X}(1)\big\vert_{Z})\, \longrightarrow\, 0.
		\end{equation}
		Since $\mathcal{O}_{X}(1)$ is ample, by \cite[Theorem 6]{Littelmann} it follows that the sheaf 
		$\mathcal{O}_X(1) \otimes \mathcal{O}(-Z)$ is globally generated and $Z$ is cut out by the sections of
		$H^0(X,\, \mathcal{O}_{X}(1)\otimes \mathcal{O}_{X}(-Z))$. 
		Note that ${{\rm Bl}_{Z}}_{*}\mathcal{O}_{{\rm Bl}_{Z}X}\,=\,\mathcal{O}_{X}$, and hence using the projection
		formula, 
		\[ H^0(X,\,\mathcal{O}_{X}(1)\otimes \mathcal{O}_{X}(-Z))\ =\ 
		H^0({\rm Bl}_{Z}X,\, {\rm Bl}^*_{Z}(\mathcal{O}_{X}(1)\otimes \mathcal{O}_{X}(-Z)))
		\]
		\[
		=\ H^0({\rm Bl}_{Z}X,\, \mathcal{O}_{{\rm Bl}_{Z}(X)}(H-E_{Z})).\]
		As $Z$ is cut out by the sections of $H^0(X,\mathcal{O}_{X}(1)\otimes \mathcal{O}_{X}(-Z))$, we know that
		$H-E_{Z}$ is base point free. Hence $H-E_{Z}$ is globally generated; in particular, $H-E_{Z}$ is nef.
		Thus we have
		\[
		\{({\rm Bl}_{Z})^*D_\alpha\,\,\big\vert\,\, \alpha\,\in\, S\setminus S_P\} \cup \{H - E_{Z}\}
		\ \subset\ {\rm Nef}({\rm Bl}_{Z} X).\]
		Note that ${\rm Pic}({\rm Bl}_{Z}(X))$ is a free abelian group generated by $\{{\rm Bl}_{Z}^*D_\alpha\}_{\alpha
			\in S\setminus S_P}$ and $E$. Consequently, for any nef divisor $D$ of ${\rm Bl}_{Z}X$, there exists integers
		$\{a_\alpha\}_{\alpha \in S\setminus S_P}$ and $b$ such that
		\[D\,\ =\,\ \sum_{\alpha \in S\setminus S_P} a_{\alpha}{\rm Bl}_{Z}^*D_\alpha + b E_{Z}.\]
		
		Denote by $e$ the class of a line in the fiber of the projection $E_Z\, \longrightarrow\, Z$.
		We have $D\cdot e\,\ge\, 0$, because $D$ is nef. Since $E_{Z}\cdot e\,=\,-1$, and ${\rm Bl}_{Z}^{*}D_{\alpha}\cdot e
		\,=\,0$ for all $\alpha \,\in \,S\setminus S_{P}$, we conclude that $b$ is non-positive. Set $m\,=\,-b$. 
		
		For $\alpha\,\in\, S\setminus S_{P}$, set \[\widetilde{C_\alpha}\,\ =\,\ {\rm Bl}_{Z}^{*}C_{\alpha}-e.\] Recall that
		using projection formula and \eqref{intersection}, \[{\rm Bl}_{Z}^{*}D_{\alpha} \cdot{\rm Bl}_{Z}^{*}C_{\beta}
		\,\ =\,\ \Bigg\{\begin{matrix}
			1,\,\, ~\alpha\,=\,\beta\\
			0,\,\, ~ \alpha\,\neq\,\beta
		\end{matrix}\] for all $\alpha, \,\beta \,\in\, S\setminus S_{P}$.
		
		Further, because $D$ is nef, we have \[D\cdot \widetilde{C_\alpha}\ \ge\ 0 \] for $\alpha\,\in\, S\setminus S_{P}$. 
		Consequently, $a_{\alpha}\,\ge\, m$ for all $\alpha\,\in\, S\setminus S_{P}$. Set $m_{\alpha}\,=\,a_{\alpha}-m$ for
		all $\alpha \,\in\, S\setminus S_{P}$. Then we have 
		\[D\ =\ \sum_{\alpha \in S\setminus S_P} m_{\alpha}{\rm Bl}_{Z}^*D_\alpha + m(H-E_{Z}).\]
		Therefore, \[{\rm Nef}({\rm Bl}_{Z} X)\ =\ \bigg \langle \{{\rm Bl}_{Z}^*D_\alpha \,\, \big\vert\,\,
		\alpha\,\in\, S\setminus S_P \}\, \cup \{ H - E_{Z}\} \bigg \rangle.\]
		Observe that \[{\rm Bl}_{Z}^{*}D_{\alpha}\cdot \widetilde{C_\beta}\ =\ \Bigg\{\begin{matrix}
			1,\,\, ~\alpha\,=\,\beta\\
			0,\,\, ~ \alpha\,\neq\,\beta
		\end{matrix}\] and \[(H-E_{Z})\cdot e\ =\ 1.\] Therefore, the dual cone
		${\rm NE}({\rm Bl}_{Z} X)$ of ${\rm Nef}({\rm Bl}_{Z}X)$ is generated by the classes of
		$$\{\widetilde{C_\alpha}\}_{\alpha \in S\setminus S_{P}} \ \, \text{and }\ \, e.$$ 
		
		The proof of the last statement follows immediately.
	\end{proof}
	
	\begin{Corollary}\label{cor: ample}
		A line bundle $L$ on ${\rm Bl}_{Z}X$ is ample if and only if there exist positive integers
		$\{a_{\alpha}\}_{\alpha \in S\setminus S_P}$, and $b\,>\,0$, such that
		\[L\ =\ \sum_{\alpha \in S\setminus S_P}a_{\alpha} {\rm Bl}_{Z}^*D_\alpha\, + b (H-E_{Z}).\]
	\end{Corollary}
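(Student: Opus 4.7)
The plan is to deduce this directly from Theorem~\ref{Theorem:blow-up} by invoking Kleiman's ampleness criterion, since the Mori cone of ${\rm Bl}_Z X$ has already been computed there.

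First I would record that the basis change from $\{{\rm Bl}_Z^*D_\alpha\}_{\alpha\in S\setminus S_P} \cup \{E_Z\}$ to $\{{\rm Bl}_Z^*D_\alpha\}_{\alpha\in S\setminus S_P} \cup \{H-E_Z\}$ is unimodular, because $H=\sum_{\alpha\in S\setminus S_P}{\rm Bl}_Z^*D_\alpha$. Hence the latter set is a $\mathbb{Z}$-basis of ${\rm Pic}({\rm Bl}_Z X)$, and any line bundle $L$ admits a unique expression
\[
L\ =\ \sum_{\alpha\in S\setminus S_P} a_\alpha\,{\rm Bl}_Z^*D_\alpha \,+\, b\,(H-E_Z)
\]
with $a_\alpha,\,b\in\mathbb{Z}$. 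This reduces the statement to showing that $L$ is ample iff all $a_\alpha>0$ and $b>0$.

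Next I would apply Kleiman's criterion: $L$ is ample iff $L\cdot\gamma > 0$ for every nonzero $\gamma\in\overline{\rm NE}({\rm Bl}_Z X)$. Since Theorem~\ref{Theorem:blow-up} shows that ${\rm NE}({\rm Bl}_Z X)$ is the polyhedral cone generated by $\{\widetilde{C}_\alpha\}_{\alpha\in S\setminus S_P}\cup\{e\}$, it suffices to test positivity on these finitely many generators.

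It then remains to compute the pairings. The intersection numbers ${\rm Bl}_Z^*D_\alpha\cdot\widetilde{C}_\beta=\delta_{\alpha\beta}$ and $(H-E_Z)\cdot e=1$ are already recorded in the proof of Theorem~\ref{Theorem:blow-up}. A short calculation using $H\cdot e=0$, $E_Z\cdot e=-1$, the projection formula giving $E_Z\cdot{\rm Bl}_Z^*C_\beta = 0$ (since ${\rm Bl}_{Z*}E_Z = 0$ as $Z$ has codimension $\ge 2$), and \eqref{intersection} yields $(H-E_Z)\cdot\widetilde{C}_\beta = 0$ and ${\rm Bl}_Z^*D_\alpha\cdot e = 0$. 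Plugging in one obtains $L\cdot\widetilde{C}_\beta = a_\beta$ and $L\cdot e = b$, and the corollary follows. There is no serious obstacle here: the computation is bookkeeping built entirely out of Theorem~\ref{Theorem:blow-up}; the only point requiring mild care is the change of basis, to ensure that for a genuine (integral) line bundle the coefficients $a_\alpha$ and $b$ are integers, so that the strict inequalities from Kleiman translate to the asserted positive-integer conditions.
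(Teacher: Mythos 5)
Your proposal is correct and follows essentially the same route as the paper: both deduce the corollary from the description of the Mori cone in Theorem~\ref{Theorem:blow-up} together with Kleiman's criterion and the intersection numbers ${\rm Bl}_{Z}^{*}D_{\alpha}\cdot \widetilde{C}_{\beta}\,=\,\delta_{\alpha\beta}$ and $(H-E_{Z})\cdot e\,=\,1$. Your write-up is in fact slightly more careful than the paper's, since you make the unimodular change of basis and the vanishing pairings $(H-E_Z)\cdot\widetilde{C}_{\beta}\,=\,0$ and ${\rm Bl}_Z^{*}D_{\alpha}\cdot e\,=\,0$ explicit, which handles the forward implication that the paper dispatches by simply citing Theorem~\ref{Theorem:blow-up}.
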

	
	\begin{proof}
		First assume that $L$ is ample. Then, by Theorem \ref{Theorem:blow-up}, the result follows.
		
		To prove the converse, assume that
		\[L\ =\ \sum_{\alpha \in S\setminus S_P}a_{\alpha} {\rm Bl}_{Z}^*D_\alpha\, + b (H-E_{Z}),\] where $a_{\alpha} \,>
		\, 0$ for all $\alpha \,\in\, S \setminus S_P$ and $b\, >\, 0$. By Theorem \ref{Theorem:blow-up}, the cone of curves
		${\rm NE}({\rm Bl}_{Z} X)$ is generated by the classes of $\{\widetilde{C_\alpha}\}_{\alpha \in S\setminus S_{P}}$
		and $e$. Since $a_{\alpha} \,> \, 0$ for all $\alpha \,\in\, S \setminus S_P$, and $b\, >\, 0$,
		and \[{\rm Bl}_{Z}^{*}D_{\alpha}\cdot \widetilde{C_\beta}\ =\ \Bigg\{\begin{matrix}
			1,\,\, ~\alpha\,=\,\beta\\
			0,\,\, ~ \alpha\,\neq\,\beta
		\end{matrix}\] \[(H-E_{Z})\cdot e\ =\ 1,\] applying Kleiman's criterion for ampleness it follows
		that $L$ is ample.
	\end{proof}
	
	\begin{Lemma}\label{lemma: big}
		The divisor $H-E_{Z}$ on ${\rm Bl}_{Z}X$ is big.
	\end{Lemma}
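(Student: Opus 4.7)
My plan is to establish bigness by proving $(H-E_Z)^n>0$, where $n=\dim X=\dim{\rm Bl}_Z X$; since $H-E_Z$ is globally generated (hence nef) by Theorem~\ref{Theorem:blow-up}, this positivity of the top self-intersection is equivalent to bigness.

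First I would expand
\[(H-E_Z)^n=\sum_{k=0}^n\binom{n}{k}(-1)^k H^{n-k}E_Z^k\]
and evaluate each term via the projection formula, $H^{n-k}E_Z^k=\mathcal{O}_X(1)^{n-k}\cdot({\rm Bl}_Z)_*E_Z^k$. Standard pushforward identities for a blow-up along a smooth codimension-$c$ center yield $({\rm Bl}_Z)_*E_Z^k=0$ for $0<k<c$, $({\rm Bl}_Z)_*E_Z^c=(-1)^{c-1}[Z]$, and for $k>c$ Segre-class expressions $(-1)^{k-1}s_{k-c}(N_{Z/X})\cap[Z]$. All cross-terms with $1\le k\le c-1$ therefore vanish, and the sum reduces to the positive leading term $H^n=\mathcal{O}_X(1)^n=\deg_{\mathcal{O}_X(1)}X$ plus alternating Segre-class corrections indexed by $k\ge c$.

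The main obstacle is controlling those Segre-class corrections so that they cannot overwhelm $H^n$. For this I would exploit the Schubert-theoretic structure of $Z\subset G/P$: the normal bundle $N_{Z/X}$ decomposes $T$-equivariantly into line bundles indexed by positive roots outside the parabolic, while the intersection numbers $\mathcal{O}_X(1)^{n-k}\cdot[Z]$ are computable by Chevalley's formula. An appealing alternative, taking advantage of the already-established global generation, would be to show directly that the morphism $\phi\colon{\rm Bl}_Z X\to\mathbb{P}^N$ defined by $|H-E_Z|$ is generically finite onto its image. By the projection formula, sections of $H-E_Z$ are identified with sections of $\mathcal{O}_X(1)\otimes\mathcal{I}_Z$ on $X$, so $\phi$ restricted to ${\rm Bl}_Z X\setminus E_Z\cong X\setminus Z$ is the composition of the embedding $\iota\colon X\hookrightarrow\mathbb{P}(V^*)$ by $\mathcal{O}_X(1)$ with linear projection from the linear span of $\iota(Z)$; bigness is then equivalent to generic finiteness of this projection, which one could approach through the homogeneity of $G/P$ and the explicit root-theoretic description of how $Z$ sits inside $X$.
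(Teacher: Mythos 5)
Your overall framework is sound --- since $H-E_Z$ is nef by Theorem~\ref{Theorem:blow-up}, bigness is indeed equivalent to $(H-E_Z)^n>0$, and equivalently to generic finiteness of the morphism defined by $|H-E_Z|$ --- but neither of your two routes is actually carried out, and in each case the step you defer is precisely where all the difficulty sits. In the intersection-theoretic route, the expansion and the pushforward identities $({\rm Bl}_Z)_*E_Z^k$ are standard, but the conclusion $(H-E_Z)^n>0$ does not follow until you bound the alternating Segre-class terms $(-1)^{k-1}\,\mathcal{O}_X(1)^{n-k}\cdot s_{k-c}(N_{Z/X})\cap[Z]$ for all $k\ge c$ against $H^n$; for an arbitrary smooth Schubert variety $Z$ in an arbitrary $G/P$ this is a genuine computation (the normal bundle need not split equivariantly into line bundles in any way that makes these terms transparently small), and you give no argument for it. In the second route, you reduce to showing that the linear projection of $\iota(X)\subset\mathbb{P}(V^*)$ from the span of $\iota(Z)$ is generically finite, but a linear projection restricted to a subvariety need not be generically finite when the center meets the variety (fibers are traces of linear spans through the center and can be positive-dimensional), so "one could approach this through homogeneity" is an acknowledgement of the gap rather than a proof.

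For comparison, the paper's argument avoids all of this: it takes a nonzero section $s\in H^0(X,\mathcal{O}_X(1)\otimes\mathcal{O}_X(-Z))$, notes that $H-E_Z$ is linearly equivalent to the strict transform $\widetilde{Z(s)}$ of the ample divisor $Z(s)\supset Z$, observes that ${\rm Bl}_ZX\setminus\widetilde{Z(s)}\cong X\setminus Z(s)$ is affine because the blow-up is an isomorphism off $Z$ and the complement of an ample divisor is affine, and then invokes the fact that an effective divisor with affine complement is big (\cite[p.~61, Lemma 2.4]{Brion22}). That argument uses only the effectivity of $H-E_Z$ and the ampleness of $\mathcal{O}_X(1)$, with no intersection numbers and no analysis of the linear projection. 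If you want to salvage your approach, the quickest repair is to replace both of your unfinished steps with this complement-is-affine criterion, or else to prove generic finiteness of $\phi$ by exhibiting a single point of the image with finite fiber (e.g.\ via the isomorphism off $E_Z$ onto $X\setminus Z(s)$) --- which again collapses into the paper's argument.
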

	
	\begin{proof}
		{}From Theorem \ref{Theorem:blow-up} it follows that $H-E_{Z}$ is an effective Cartier divisor on ${\rm Bl}_ZX$. 
		As ${{\rm Bl}_{Z}}_{*}\mathcal{O}_{{\rm Bl}_{Z}X}\,=\,\mathcal{O}_{X}$, using the projection
		formula, 
		\[ H^0(X,\,\mathcal{O}_{X}(1)\otimes \mathcal{O}_{X}(-Z))\ =\ H^0({\rm Bl}_{Z}X,\,
		\mathcal{O}_{{\rm Bl}_{Z}(X)}(H-E_{Z})).\] Since $H-E_{Z}$ is effective, there is a nonzero section
		$0\,\neq\, s \,\in\, H^0(X,\, \mathcal{O}_{X}(1)\otimes \mathcal{O}_{X}(-Z))$ such that the zero-locus $Z(s)$ of
		$s$ contains $Z$. This implies that the divisor $H-E_{Z}$ is linearly equivalent to the strict transform
		$\widetilde{Z(s)}$ of $Z(s)$ in ${\rm Bl}_{Z}X$. From the ampleness of $\mathcal{O}_{X}(1)$
		it follows that the open subset $X-Z(s)$
		is affine. Moreover ${\rm Bl}_{Z}X- \widetilde{Z(s)}$ is isomorphic to $X-Z(s)$, and hence
		${\rm Bl}_{Z}X- \widetilde{Z(s)}$ is affine. Therefore, by \cite[p.~61, Lemma 2.4]{Brion22} it follows that
		$H-E_Z$ is a big divisor.
	\end{proof}
	
	For $\alpha\,\in\, S\setminus S_P$, define $$\beta_\alpha\ :=\ \langle w_{0,P}(\rho),\, {\alpha}^{\vee} \rangle ,$$
	where $\langle \cdot,\, \cdot \rangle$ is the pairing in \eqref{ep}, and $w_{0,P}$ is the longest element of $W_P$.
	
	\begin{Theorem}\label{Theorem: Big}
		Let $X$,\, ${\rm Bl}_ZX$ and ${\rm Bl}_Z$ be as before, such that $\beta_\alpha + 2-c\,\ge\, 0$ for all
		$\alpha\,\in\, S\setminus S_P$. Then the anticanonical line bundle of ${\rm Bl}_{Z}(X)$ is big.
	\end{Theorem}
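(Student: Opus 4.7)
The plan is to expand $-K_{{\rm Bl}_Z X}$ in the basis $\{({\rm Bl}_Z)^*D_\alpha\}_{\alpha\in S\setminus S_P}\cup\{H-E_Z\}$ of $\Pic({\rm Bl}_Z X)\otimes\mathbb{R}$ and to observe that, under the hypothesis, each coefficient of $({\rm Bl}_Z)^*D_\alpha$ is non-negative while the coefficient of $H-E_Z$ is the positive integer $c-1$. Since each $({\rm Bl}_Z)^*D_\alpha$ is nef by Theorem~\ref{Theorem:blow-up} and $H-E_Z$ is big by Lemma~\ref{lemma: big}, this will realise $-K_{{\rm Bl}_Z X}$ as the sum of a nef and a big divisor, which is big.

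First I would write $-K_X$ in terms of the $D_\alpha$. The tangent space of $G/P$ at the base point is the $P$-module $\mathfrak{g}/\mathfrak{p}$, whose top exterior power has character $2(\rho-\rho_P)$, where $\rho_P$ denotes the half-sum of the positive roots of the Levi of $P$. Hence $-K_X=L_{2\rho-2\rho_P}$, and the coefficient $n_\alpha$ of $D_\alpha=L_{\varpi_\alpha}$ in the expansion of $-K_X$ is
\[n_\alpha\ :=\ \langle 2\rho-2\rho_P,\,\alpha^\vee\rangle\ =\ 2-2\langle\rho_P,\,\alpha^\vee\rangle\qquad\text{for }\alpha\in S\setminus S_P.\]

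Next I would compare $n_\alpha$ with $\beta_\alpha$. A simple reflection $s_\gamma$ with $\gamma\in S_P$ alters only the coefficient of $\gamma$ in any root, and so preserves the coefficient of every simple root in $S\setminus S_P$; therefore $W_P$ permutes $R^+\setminus R_P^+$, and $w_{0,P}$ fixes the weight $\rho-\rho_P=\tfrac{1}{2}\sum_{\beta\in R^+\setminus R_P^+}\beta$. Combined with the standard identity $w_{0,P}(\rho_P)=-\rho_P$, this gives $w_{0,P}(\rho)=\rho-2\rho_P$, and pairing with $\alpha^\vee$ for $\alpha\in S\setminus S_P$ yields
\[\beta_\alpha\ =\ 1-2\langle\rho_P,\,\alpha^\vee\rangle,\qquad\text{so that}\qquad n_\alpha\ =\ 1+\beta_\alpha.\]

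Substituting into Corollary~\ref{Cor: anti-cano} and using $H=\sum_{\alpha\in S\setminus S_P}({\rm Bl}_Z)^*D_\alpha$, a direct rearrangement gives
\[-K_{{\rm Bl}_Z X}\ =\ (c-1)(H-E_Z)\,+\,\sum_{\alpha\in S\setminus S_P}(\beta_\alpha+2-c)\,({\rm Bl}_Z)^*D_\alpha.\]
Under the hypothesis $\beta_\alpha+2-c\geq 0$ the second summand is a non-negative integer combination of nef classes, hence nef; and since $c\geq 2$, the first summand is a positive multiple of the big class $H-E_Z$, hence big. Since the sum of a nef and a big divisor is big, this proves that $-K_{{\rm Bl}_Z X}$ is big. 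The main obstacle is the identification $n_\alpha=1+\beta_\alpha$, which is a short root-system computation; once it is in hand, the remainder of the argument is a single rearrangement of terms.
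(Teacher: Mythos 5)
Your proof is correct and follows essentially the same route as the paper: the same decomposition $-K_{{\rm Bl}_Z X}=\sum_{\alpha}(\beta_\alpha+2-c)({\rm Bl}_Z)^*D_\alpha+(c-1)(H-E_Z)$ combined with the bigness of $H-E_Z$ from Lemma~\ref{lemma: big}. The only (immaterial) differences are that you derive the coefficient $1+\beta_\alpha$ of $-K_X$ by a direct root-system computation ($w_{0,P}(\rho)=\rho-2\rho_P$) where the paper cites Brion--Kumar for $\lambda=\rho+w_{0,P}(\rho)$, and you invoke ``nef $+$ big $=$ big'' where the paper uses ``effective $+$ big $=$ big.''
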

	
	\begin{proof}
		The isomorphism $L$ in \eqref{eq: picard} takes
		the anticanonical line bundle $-K_{X}$ of $X$ to $\lambda\,=\,\rho+w_{0,P}(\rho)$
		\cite[Chapter 3, Section 3.1, p.~85]{BrionKumar}.
		Observe that the line bundle ${L}_{\lambda}$ corresponds to the divisor 
		\[\sum_{\alpha\in S\setminus S_{P}} (1+\beta_\alpha)D_{\alpha}.\] 
		Thus by Corollary \ref{Cor: anti-cano}, the anticanonical divisor $-K_{{\rm Bl}_{Z}(X)}$ of ${\rm Bl}_{Z}(X)$ is
		\[
		-K_{{\rm Bl}_{Z}(X)}\ =\ \sum_{\alpha\in S\setminus S_{P}} (1+\beta_\alpha){\rm Bl}_Z^{*}D_{\alpha}-(c-1)E_{Z}.
		\]
		Hence the following holds:
		\begin{equation}\label{eb}
			-K_{{\rm Bl}_{Z}(X)}\ =\ \sum_{\alpha\in S\setminus S_{P}} (\beta_\alpha+2-c){\rm Bl}_Z^{*}D_{\alpha}+(c-1)(H-E_{Z}).
		\end{equation}
		We have $\beta_\alpha + 2-c\,\ge\, 0$ for all $\alpha\,\in\, S\setminus S_{P}$, so the divisor
		$\sum_{\alpha\in S\setminus S_{P}} (\beta_\alpha+2-c){\rm Bl}_Z^{*}D_{\alpha}$ is effective. Also
		$(c-1)(H-E_{Z})$ is big by Lemma \ref{lemma: big} (recall that $c$ is the codimension of $Z\, \subset\, X$).
		On the other hand, the sum of a big divisor and an effective divisor is big (cf. \cite[Corollary 2.2.7. p. 141]{Lazarsfeld}).
		Therefore, from \eqref{eb} we conclude that the anticanonical line bundle of ${\rm Bl}_{Z}(X)$ is big.
	\end{proof}
	
	\begin{Theorem}\label{Theorem: Fano}
		Let $X$,\, ${\rm Bl}_{Z}X$ and ${\rm Bl}_{Z}$ be as before. Then the anticanonical line bundle of ${\rm Bl}_{Z}(X)$
		is ample (respectively, nef) if and only if $\beta_\alpha-c+2\,>\,0$ (respectively, $\beta_\alpha-c+2\,\geq\,0$) for 
		all $\alpha\,\in\, S\setminus S_P$, where $c$ is the codimension of $Z$ in $X$.
	\end{Theorem}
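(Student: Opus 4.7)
The whole statement reduces, via equation \eqref{eb}, to reading off positivity of the coefficients of $-K_{{\rm Bl}_{Z}(X)}$ in the basis of nef generators produced in Theorem~\ref{Theorem:blow-up}. My plan is to separate the ``if'' and ``only if'' directions and in both cases use the dual description of the Mori cone ${\rm NE}({\rm Bl}_{Z}X)$.

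For the \emph{sufficient} direction I would start from
\[
-K_{{\rm Bl}_{Z}(X)}\ =\ \sum_{\alpha\in S\setminus S_{P}} (\beta_\alpha+2-c)\,{\rm Bl}_Z^{*}D_{\alpha}\, + (c-1)(H-E_{Z}),
\]
which is \eqref{eb}. Assuming $\beta_\alpha - c + 2 > 0$ for every $\alpha \in S\setminus S_P$, and recalling that the hypothesis $c\ge 2$ forces $c-1 \ge 1 > 0$, the displayed formula exhibits $-K_{{\rm Bl}_{Z}(X)}$ as a strictly positive integral combination of the ample-cone generators listed in Corollary~\ref{cor: ample}; hence $-K_{{\rm Bl}_{Z}(X)}$ is ample. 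The nef version is identical, only replacing ``$>0$'' by ``$\ge 0$'' and invoking the nef-cone description in Theorem~\ref{Theorem:blow-up}.

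For the \emph{necessary} direction I would test $-K_{{\rm Bl}_{Z}(X)}$ against the generators $\{\widetilde{C_\alpha}\}_{\alpha\in S\setminus S_P}$ and $e$ of ${\rm NE}({\rm Bl}_{Z}X)$ (again from Theorem~\ref{Theorem:blow-up}). Using the intersection numbers already recorded in the proofs of Theorem~\ref{Theorem:blow-up} and Corollary~\ref{cor: ample}, namely ${\rm Bl}_{Z}^{*}D_{\alpha}\cdot\widetilde{C_\beta}=\delta_{\alpha\beta}$, $(H-E_{Z})\cdot\widetilde{C_\alpha}=0$, ${\rm Bl}_{Z}^{*}D_{\alpha}\cdot e=0$ and $(H-E_{Z})\cdot e=1$, I would compute
\[
-K_{{\rm Bl}_{Z}(X)}\cdot \widetilde{C_\alpha}\ =\ \beta_\alpha + 2 - c,\qquad -K_{{\rm Bl}_{Z}(X)}\cdot e\ =\ c-1.
\]
If $-K_{{\rm Bl}_{Z}(X)}$ is nef (respectively ample) then all these intersection numbers are $\ge 0$ (respectively $>0$), and since $c-1\ge 1$ is automatic from $c\ge 2$, the only nontrivial conditions are $\beta_\alpha + 2 - c \ge 0$ (respectively $>0$) for every $\alpha \in S\setminus S_P$, which is exactly the stated inequality. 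The ``only if'' for ampleness uses Kleiman's criterion applied to the generators of ${\rm NE}({\rm Bl}_{Z}X)$; the ``only if'' for nefness is the definition together with the fact that the Mori cone is generated by the two families above.

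There is no real obstacle: once \eqref{eb} is in hand and the Mori cone description of Theorem~\ref{Theorem:blow-up} is available, this is essentially a bookkeeping argument on intersection numbers. The only mild care needed is to notice that the coefficient of $H-E_Z$, namely $c-1$, is automatically positive under the standing hypothesis $c\ge 2$, so it imposes no extra condition and the entire criterion is captured by the inequalities on $\beta_\alpha + 2 - c$.
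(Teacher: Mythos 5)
Your proposal is correct and follows essentially the same route as the paper: the paper's proof also starts from the decomposition \eqref{eb} and then simply invokes Corollary~\ref{cor: ample} (for ampleness) and Theorem~\ref{Theorem:blow-up} (for nefness) to read off the criterion from the coefficients. Your version merely makes explicit the intersection-number computations ($-K_{{\rm Bl}_{Z}(X)}\cdot \widetilde{C_\alpha}=\beta_\alpha+2-c$ and $-K_{{\rm Bl}_{Z}(X)}\cdot e=c-1\ge 1$) that the paper leaves implicit in those citations, and your observation that $c-1>0$ is automatic, so the coefficient of $H-E_Z$ imposes no condition, is exactly the point the paper's argument relies on.
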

	
	\begin{proof}
		Consider the isomorphism in \eqref{eb}. From Corollary \ref{cor: ample} (respectively, Theorem \ref{Theorem:blow-up})
		it follows that the anticanonical line bundle of ${\rm Bl}_{Z}(X)$ is ample (respectively, nef) if and only if
		$\beta_\alpha-c+2\,>\, 0$ (respectively, $\beta_\alpha-c+2\,\geq\, 0$) for all $\alpha\,\in\, S\setminus S_P$.
	\end{proof}
	
	\begin{Corollary}\label{cor: Fano}
		Let $X$,\, ${\rm Bl}_{Z}X$ and ${\rm Bl}_{Z}$ be as before. Then ${\rm Bl}_{Z}(X)$ is Fano (respectively, weak-Fano)
		if and only if $\beta_\alpha-c+2\,>\,0$ (respectively, $\beta_\alpha-c+2\,\geq\, 0$) for all $\alpha\,\in\,
		S\setminus S_P$, where $c$ is the codimension of $Z$ in $X$.
	\end{Corollary}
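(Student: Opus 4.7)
The plan is to deduce the corollary essentially as a formal consequence of the two previous results, namely Theorem \ref{Theorem: Fano} (ampleness/nefness of $-K_{{\rm Bl}_Z X}$) and Theorem \ref{Theorem: Big} (bigness of $-K_{{\rm Bl}_Z X}$). By definition, ${\rm Bl}_Z X$ is Fano precisely when $-K_{{\rm Bl}_Z X}$ is ample, and it is weak-Fano precisely when $-K_{{\rm Bl}_Z X}$ is simultaneously nef and big. So the task is to translate each of these into the inequality $\beta_\alpha - c + 2 > 0$ (respectively, $\geq 0$) for all $\alpha \in S \setminus S_P$.

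For the Fano case, I would simply invoke the ampleness statement of Theorem \ref{Theorem: Fano}: ${\rm Bl}_Z X$ Fano $\Longleftrightarrow$ $-K_{{\rm Bl}_Z X}$ ample $\Longleftrightarrow$ $\beta_\alpha - c + 2 > 0$ for all $\alpha \in S \setminus S_P$. Nothing further is needed since ampleness already implies bigness.

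For the weak-Fano case, the forward direction is immediate: if ${\rm Bl}_Z X$ is weak-Fano, then $-K_{{\rm Bl}_Z X}$ is in particular nef, so the nef part of Theorem \ref{Theorem: Fano} gives $\beta_\alpha - c + 2 \geq 0$ for all $\alpha$. For the converse direction, assuming the inequality $\beta_\alpha - c + 2 \geq 0$, the nef part of Theorem \ref{Theorem: Fano} furnishes the nefness of $-K_{{\rm Bl}_Z X}$, and Theorem \ref{Theorem: Big} simultaneously provides the bigness under the very same hypothesis. Combining these yields weak-Fano.

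There is no real obstacle here, since all the heavy lifting (controlling the nef cone via the explicit generators $\{{\rm Bl}_Z^* D_\alpha\}$ and $H - E_Z$, and checking bigness by writing $-K_{{\rm Bl}_Z X}$ as an effective combination plus a big divisor in \eqref{eb}) has already been carried out. The only thing to be slightly careful about is that the conditions for nefness of $-K_{{\rm Bl}_Z X}$ in Theorem \ref{Theorem: Fano} and the sufficient condition for bigness in Theorem \ref{Theorem: Big} are literally the same inequality $\beta_\alpha + 2 - c \geq 0$; this coincidence is what makes the corollary work cleanly, so I would point it out explicitly before stating the final equivalences.
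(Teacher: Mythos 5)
Your proposal is correct and is exactly the argument the paper intends: the paper's proof simply cites Theorem \ref{Theorem: Fano} and Theorem \ref{Theorem: Big}, and your write-up spells out the same deduction (ampleness gives Fano directly; nefness plus the coinciding bigness hypothesis gives weak-Fano). No difference in substance.
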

	
	\begin{proof}
		This follows from Theorem \ref{Theorem: Fano} and Theorem \ref{Theorem: Big}.
	\end{proof}
	
	\begin{Corollary}\label{Cor: blow-up point}
		Assume that $P\,=\,B$. Then the anticanonical line bundle of ${\rm Bl}_{Z}X$ is ample (respectively, nef) if and only
		if $3-c\,>\, 0$ (respectively, $3-c\,\ge\, 0$). In particular, ${\rm Bl}_{Z}X$ is Fano (respectively, weak Fano) if and only
		if $3-c\,>\, 0$ (respectively, $3-c\,\ge\, 0$).
	\end{Corollary}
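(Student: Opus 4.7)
The plan is to reduce this corollary directly to Theorem~\ref{Theorem: Fano} and Corollary~\ref{cor: Fano} by computing the numerical invariants $\beta_\alpha$ in the special case $P = B$.

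First, I would unpack the definition of $\beta_\alpha = \langle w_{0,P}(\rho),\, \alpha^\vee\rangle$ under the hypothesis $P = B$. Since $S_B = \emptyset$, the Weyl subgroup $W_B$ is trivial, and consequently its longest element $w_{0,B}$ is the identity in $W$. Therefore $w_{0,B}(\rho) = \rho$, and so
\[\beta_\alpha \,=\, \langle \rho,\, \alpha^\vee \rangle\]
for every $\alpha \in S \setminus S_B = S$.

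Next, I would invoke the standard identity $\rho = \sum_{i=1}^{\ell} \varpi_i$ (the half-sum of positive roots equals the sum of the fundamental weights). Combined with the defining property $\langle \varpi_i,\, \alpha_j^\vee\rangle = \delta_{ij}$, this gives $\beta_\alpha = 1$ for every simple root $\alpha$. Hence
\[\beta_\alpha - c + 2 \,=\, 3 - c\]
for all $\alpha \in S \setminus S_B$, uniformly in $\alpha$.

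Finally, I would substitute this into the criteria of Theorem~\ref{Theorem: Fano}: the anticanonical line bundle of ${\rm Bl}_Z X$ is ample (respectively, nef) iff $3 - c > 0$ (respectively, $3 - c \geq 0$). The Fano/weak-Fano statement then follows immediately from Corollary~\ref{cor: Fano}, since the bigness hypothesis needed for weak-Fano is supplied by Theorem~\ref{Theorem: Big} (whose condition $\beta_\alpha + 2 - c \geq 0$ is identical to the nef condition here). There is no genuine obstacle: the only substantive input is the identification $w_{0,B} = e$ and the elementary computation $\langle \rho, \alpha^\vee\rangle = 1$.
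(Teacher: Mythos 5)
Your proposal is correct and follows exactly the paper's own argument: $P=B$ gives $S_P=\emptyset$ and $w_{0,P}=\mathrm{id}$, hence $\beta_\alpha=\langle\rho,\alpha^\vee\rangle=1$ for all simple $\alpha$, and the criteria of Theorem~\ref{Theorem: Fano} and Corollary~\ref{cor: Fano} reduce to $3-c>0$ (respectively $3-c\ge 0$). The only difference is that you spell out the intermediate steps ($\rho=\sum_i\varpi_i$ and the appeal to Theorem~\ref{Theorem: Big} for bigness) slightly more explicitly than the paper does.
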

	
	\begin{proof}
		As $P\,=\,B$, we have $S_P\,=\,\emptyset$ and $w_{0,P}\,=\,id$. Therefore, it follows that $\beta_\alpha\,=\,1$ for 
		$\alpha\,\in\, S$. Consequently, Theorem \ref{Theorem: Fano} gives that the anticanonical line bundle of
		${\rm Bl}_{Z}(X)$ is ample (respectively, nef) if and only if $3-c\,>\,0$ (respectively, $3-c\,\ge\, 0$).
	\end{proof}
	
	\begin{Corollary}
		Let $X\,=\,G/B$,\, ${\rm Bl}_{Z}X$ and ${\rm Bl}_{Z}$ be as before. If $Z$ is a smooth Schubert variety in $X$ of
		codimension at most two, then ${\rm Bl}_{Z}X$ is Fano. Moreover, when $Z$ is a smooth Schubert variety in $X$ of
		codimension exactly three, then the anticanonical line bundle of ${\rm Bl}_{Z}X$ is nef but not ample.
	\end{Corollary}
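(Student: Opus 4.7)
The plan is essentially to reduce everything to Corollary~\ref{Cor: blow-up point}, which already handles the case $P = B$ and characterizes when $-K_{{\rm Bl}_ZX}$ is ample (respectively, nef) purely in terms of the codimension $c$ of $Z$.

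First I would recall the standing hypothesis $c \ge 2$ from Section~\ref{S3}, so that the phrase ``codimension at most two'' in the statement really means $c = 2$. Then I would invoke Corollary~\ref{Cor: blow-up point} directly: with $P = B$, we have $S_P = \emptyset$, $w_{0,P} = {\rm id}$, and hence $\beta_\alpha = \langle \rho,\alpha^\vee\rangle = 1$ for every $\alpha \in S$. Substituting into the criterion $\beta_\alpha - c + 2 > 0$ (respectively, $\ge 0$) of Theorem~\ref{Theorem: Fano} collapses it to $3 - c > 0$ (respectively, $3 - c \ge 0$).

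For the first assertion, $c = 2$ gives $3 - c = 1 > 0$, so by Corollary~\ref{Cor: blow-up point} the anticanonical bundle of ${\rm Bl}_ZX$ is ample, i.e.\ ${\rm Bl}_ZX$ is Fano. For the second assertion, $c = 3$ gives $3 - c = 0$, so the same corollary shows that $-K_{{\rm Bl}_ZX}$ is nef but fails to be ample (the strict inequality $3 - c > 0$ is violated).

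There is no genuine obstacle here; the only thing one must be careful about is the standing convention $c \ge 2$, which forces ``codimension at most two'' to mean exactly $c = 2$. Once that is observed, the statement is an immediate specialization of Corollary~\ref{Cor: blow-up point}, and the proof is a one-line application of it in each of the two cases.
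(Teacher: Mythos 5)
Your proposal is correct and matches the paper's (implicit) argument: the corollary is stated without proof precisely because it is the immediate specialization of Corollary~\ref{Cor: blow-up point} to $c=2$ and $c=3$, exactly as you carry out. Your observation that the standing hypothesis $c\ge 2$ forces ``codimension at most two'' to mean $c=2$ is the only point requiring care, and you handle it correctly.
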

	
	\begin{Corollary}
		Let $P\,=\,B$ (so $X\,=\,G/B$), and let $Z$ be a point of $X$. Then the anticanonical line bundle of
		${\rm Bl}_{Z}(X)$ is ample if and only if $X\,=\, {\rm SL_2(\C)}/B$.
	\end{Corollary}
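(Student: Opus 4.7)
The plan is to reduce the claim to Corollary~\ref{Cor: blow-up point} and then pin $G$ down via the classification of small-dimensional flag varieties. Since $P\,=\,B$ and $Z$ is a Schubert point, the codimension of $Z$ in $X$ equals $c\,=\,\dim X\,=\,|R^{+}|$, the number of positive roots of $G$. So the ampleness question translates into a numerical question about $\dim X$.

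For the ``if'' direction, I would argue directly: if $X\,=\,{\rm SL_2(\C)}/B\,\cong\,\mathbb{P}^1$, then $Z$ is a Cartier divisor in the smooth curve $X$, so the blow-up map ${\rm Bl}_Z\,:\,{\rm Bl}_Z X\,\longrightarrow\, X$ is an isomorphism. Hence ${\rm Bl}_Z X\,\cong\,\mathbb{P}^1$, whose anticanonical bundle $\mathcal{O}_{\mathbb{P}^1}(2)$ is ample.

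For the converse, assume that $-K_{{\rm Bl}_Z X}$ is ample. First I would dispose of the case $\dim X\,=\,1$: here $G$ is simply connected of rank one, so $G\,\cong\,{\rm SL_2(\C)}$ and we are done. Otherwise $\dim X\,\ge\, 2$, so $c\,\ge\, 2$ and we are in the framework of Section~\ref{S3}; Corollary~\ref{Cor: blow-up point} then forces $3-c\,>\, 0$, so $c\,=\,2$ and $\dim X\,=\,2$.

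The main obstacle, and the step the whole argument hinges on, is ruling out $\dim X\,=\,2$. For this I would invoke the classification of irreducible root systems: every irreducible root system of rank $\ge 2$ has $|R^{+}|\,\ge\, 3$ (with the minimum attained by $A_2$, where $|R^{+}|\,=\,3$). Hence no simple $G$ produces a flag variety of dimension exactly $2$, so the case $c\,=\,2$ is impossible and we must have $\dim X\,=\,1$, i.e.\ $X\,=\,{\rm SL_2(\C)}/B$. Everything else is a direct application of the preceding corollary, so it is really the small-rank classification that does the work.
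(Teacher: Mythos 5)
Your proof is correct and follows the same route as the paper --- the paper's entire proof is the sentence ``This follows immediately from Corollary~\ref{Cor: blow-up point}'' --- but you have actually supplied the reasoning that makes the deduction work, and in doing so you have isolated the genuine content. The paper's one-liner silently uses three things that you make explicit: (a) for a point $Z$ one has $c=\dim X=|R^{+}|$; (b) the case $\dim X=1$ lies outside the standing hypothesis $c\ge 2$ of Section~\ref{S3} and must be treated by hand (your observation that the blow-up along a Cartier divisor is an isomorphism handles it cleanly); and (c) the case $c=2$, which Corollary~\ref{Cor: blow-up point} does \emph{not} exclude, must be ruled out by the classification fact that an irreducible root system of rank $\ge 2$ has at least three positive roots. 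Step (c) is the crux, and it is nowhere in the paper's proof.

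One caveat, which your argument exposes rather than creates: the paper's standing hypothesis is that $G$ is \emph{semisimple}, whereas your exclusion of $\dim X=2$ only quotes the classification of \emph{irreducible} root systems, i.e.\ it assumes $G$ simple. This is not a defect of your proof so much as of the statement: for $G={\rm SL}_2(\C)\times{\rm SL}_2(\C)$ one has $X=\mathbb{P}^1\times\mathbb{P}^1$, a point $Z$ has codimension $c=2$, and Corollary~\ref{Cor: blow-up point} then says $-K_{{\rm Bl}_Z X}$ \emph{is} ample (indeed ${\rm Bl}_Z(\mathbb{P}^1\times\mathbb{P}^1)$ is the degree-$7$ del Pezzo surface), contradicting the ``only if'' direction. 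So the corollary is true exactly when $G$ is assumed simple, which is the generality in which your proof is complete; you should state that assumption explicitly rather than leave it implicit in the appeal to irreducibility.
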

	
	\begin{proof}
		This follows immediately from Corollary \ref{Cor: blow-up point}.
	\end{proof}
	
	\begin{Corollary}
		Let $Z$ be a point of $X\,=\,G/B$. Then the anticanonical bundle of ${\rm Bl}_{Z}(X)$ is nef if and only
		if $X\,=\, {\rm SL_2}(\mathbb{C})/B$ or $X\,=\, {\rm SL_3}(\mathbb{C})/B$.
	\end{Corollary}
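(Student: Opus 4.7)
\emph{Plan.} The approach is to reduce the assertion to a purely numerical condition via Corollary~\ref{Cor: blow-up point} and then classify the root systems satisfying that condition. Since $Z$ is a single point of $X = G/B$, its codimension equals $\dim_{\C}(G/B) = |R^{+}|$. With $P = B$, Corollary~\ref{Cor: blow-up point} asserts that the anticanonical line bundle of $\mathrm{Bl}_{Z}X$ is nef if and only if $3 - c \geq 0$. Substituting $c = |R^+|$, the problem reduces to characterizing those $G$ for which $|R^{+}| \leq 3$.

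Next I would enumerate the irreducible (simple) root systems with at most three positive roots. In rank one, the only type is $A_{1}$, which contributes one positive root and corresponds to $G = \mathrm{SL}_{2}(\C)$. In rank two, the possibilities are $A_{2}$, $B_{2}$, and $G_{2}$, with $3$, $4$, and $6$ positive roots respectively, so only $A_{2}$, i.e.\ $G = \mathrm{SL}_{3}(\C)$, remains admissible. For rank $\geq 3$, every irreducible type already has at least six positive roots (for instance $A_{n}$ has $\binom{n+1}{2}$, which is $\geq 6$ for $n \geq 3$), so these are all excluded. Under the implicit assumption that $G$ is simple, this forces $G \in \{\mathrm{SL}_{2}(\C),\, \mathrm{SL}_{3}(\C)\}$, yielding the claimed dichotomy.

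The only minor subtlety is that the framework of Section~\ref{S3} presumes $c \geq 2$, which excludes the case $X = \mathrm{SL}_{2}(\C)/B = \PP^{1}$ (where a point has codimension $c = 1$). Here, however, the blow-up map is an isomorphism, so $\mathrm{Bl}_{Z}X \cong \PP^{1}$, whose anticanonical bundle $\mathcal{O}(2)$ is ample and a fortiori nef; this recovers the $\mathrm{SL}_{2}$ case by direct inspection. Consequently I do not anticipate any substantive obstacle: the whole corollary is a root-theoretic bookkeeping consequence of Corollary~\ref{Cor: blow-up point}, and the ``hardest'' step is simply collecting the minimum positive-root counts across the classification of simple Lie algebras.
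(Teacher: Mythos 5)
Your proposal is correct and follows essentially the same route as the paper, which simply deduces the statement from Corollary~\ref{Cor: blow-up point}; you merely spell out the enumeration $|R^{+}|\le 3$ that the paper leaves implicit. Your added remarks --- that the $\mathrm{SL}_2$ case has $c=1$ and so falls outside the standing hypothesis $c\ge 2$ (requiring the direct observation that the blow-up is an isomorphism), and that one must read $G$ as simple to exclude cases like $\mathrm{SL}_2\times\mathrm{SL}_2$ --- are points the paper glosses over, and they strengthen rather than weaken the argument.
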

	
	\begin{proof}
		This follows immediately from Corollary \ref{Cor: blow-up point}.
	\end{proof}
	
	\section{Grassmannian and its Generalizations}\label{S4}
	
	Set $G\,=\,{\rm SL}_{n}(\mathbb{C})$ and $P$ to be the maximal parabolic subgroup corresponding to the simple root
	$\alpha_r$. Then $X\,=\,G/P$ is isomorphic to the Grassmannian ${\rm Gr}(r, n)$, parametrizes the $r$--dimensional
	linear subspaces of $\mathbb{C}^n$.	
	
	\begin{Proposition}\label{cor: Grassmannian}
		Take $G\,=\,{\rm SL}_{n}(\mathbb{C})$ and $P$ to be the maximal parabolic subgroup corresponding to the simple root
		$\alpha_r$. Set $Z$ to be a point of $X\,=\, G/P$. Then the anticanonical line bundle of ${\rm Bl}_{Z}(X)$ is ample
		if and only if one of the following three holds:
		\begin{itemize}
			\item [(i)] $(r,\, n)\,=\,(1,\, n)$,
			
			\item [(ii)] $(r,\, n)\,=\,(n-1,\, n)$,
			
			\item [(iii)] $(r,\, n)\,=\,(2,\, 4)$. 
		\end{itemize} 
	\end{Proposition}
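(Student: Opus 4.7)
The overall strategy is to reduce the statement to a numerical inequality via Theorem~\ref{Theorem: Fano} and then enumerate the solutions. Since $P$ is the maximal parabolic associated with the simple root $\alpha_r$, we have $S\setminus S_P=\{\alpha_r\}$, so the only inequality to check is
\[
\beta_{\alpha_r}-c+2\ >\ 0,
\]
where $c$ is the codimension of $Z$ in $X=\mathrm{Gr}(r,n)$. As $Z$ is a point, $c=\dim X=r(n-r)$, so the condition becomes $\beta_{\alpha_r}+2>r(n-r)$.

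The next step is to compute $\beta_{\alpha_r}=\langle w_{0,P}(\rho),\alpha_r^{\vee}\rangle$. Using the standard realization of the root system $A_{n-1}$ (positive roots $\epsilon_i-\epsilon_j$ for $i<j$, and $\rho$ having coefficient $\tfrac{n-2i+1}{2}$ on $\epsilon_i$), the Levi Weyl group $W_P=S_r\times S_{n-r}$ acts by independently reversing $\{\epsilon_1,\ldots,\epsilon_r\}$ and $\{\epsilon_{r+1},\ldots,\epsilon_n\}$. A short direct calculation yields that the coefficient of $\epsilon_r$ in $w_{0,P}(\rho)$ is $\tfrac{n-1}{2}$ and the coefficient of $\epsilon_{r+1}$ is $-\tfrac{n-1}{2}$, so
\[
\beta_{\alpha_r}\ =\ \langle w_{0,P}(\rho),\,\epsilon_r-\epsilon_{r+1}\rangle\ =\ n-1.
\]
(As a sanity check, this agrees with the well-known fact that $-K_{\mathrm{Gr}(r,n)}=\mathcal{O}_X(n)$, so that $1+\beta_{\alpha_r}=n$.)

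Substituting, the Fano condition becomes the purely arithmetic inequality
\[
r(n-r)\ <\ n+1,\qquad 1\le r\le n-1.
\]
The remaining step is to list the integer solutions. For $r=1$ the left side is $n-1<n+1$, always true, giving case (i). By the symmetry $r\leftrightarrow n-r$ we get case (ii). For $2\le r\le n-2$ we have $r(n-r)\ge 2(n-2)$, and $2(n-2)<n+1$ only when $n<5$, which forces $(r,n)=(2,4)$, case (iii). The cases $r\ge 3$ with $n-r\ge 3$ give $r(n-r)\ge 9>n+1$ since then $n\ge 6$. This finishes the enumeration and hence the proof. The only nontrivial ingredient is the computation of $\beta_{\alpha_r}$; everything else is an immediate application of Theorem~\ref{Theorem: Fano} and a brief case analysis.
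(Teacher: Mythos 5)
Your proof is correct and follows essentially the same route as the paper: reduce via Theorem~\ref{Theorem: Fano} to the inequality $\beta_{\alpha_r}+2>r(n-r)$, show $\beta_{\alpha_r}=n-1$, and enumerate the solutions. The only difference is that you compute $\beta_{\alpha_r}$ by a direct coordinate calculation of $w_{0,P}(\rho)$ in the $A_{n-1}$ root system, whereas the paper invokes the identity $w_{0,S\setminus\{\alpha_r\}}(\alpha_r^{\vee})=\alpha_0^{\vee}$ from \cite[Lemma 3.2]{Kannan-Saha} to get $\beta_{\alpha_r}=\langle\rho,\alpha_0^{\vee}\rangle=n-1$; both computations are valid and yield the same value.
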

	
	\begin{proof}
		Since $P$ is the maximal parabolic subgroup associated to the simple root $\alpha_r$, we have
		$S\setminus S_P\,=\,\{\alpha_{r}\}$. Recall that $\beta_{\alpha_r}\,=\,\langle w_{0, S\setminus
			\{\alpha_{r}\}}(\rho),\, \alpha_{r}^{\vee}\rangle$. Note that 
		$w_{0, S\setminus\{\alpha_{r}\}}(\alpha_{r}^{\vee})\,=\,{\alpha_0}^{\vee}$
		by \cite[Lemma 3.2]{Kannan-Saha}.
		Consequently, we have $\beta_{\alpha_r}\,=\,\langle \rho,\, \alpha_0^{\vee}\rangle\,=\,n-1$. Therefore, by
		Theorem \ref{Theorem: Fano} it follows that the anticanonical line bundle of ${\rm Bl}_{Z}(X)$ is ample if and
		only if $\beta_{\alpha_r}+2\, >\, \dim X$. 
		Observe that $\dim X\,=\,r(n-r)$, so $n+1\, >\,r(n-r)$ if and only if the pair $(r,\,n)$ is one
		of the following forms $(r,\, n)\,=\,(1,\, n)$, $(r,\, n)\,=\,(n-1,\, n)$ and $(r,\, n)\,=\,(2,\, 4)$. 
	\end{proof}
	
	\begin{Corollary}
		Set $G\,=\,{\rm SL}_{n}(\mathbb{C})$ and $P$ to be the maximal parabolic subgroup corresponding to the simple root
		$\alpha_r$. Take $Z$ to be a point of $X\,=\,G/P$. Then the anticanonical line bundle of ${\rm Bl}_{Z}(X)$ is nef
		but not ample if and only if $(r,\, n)\,=\,(2,\, 5)$. 
	\end{Corollary}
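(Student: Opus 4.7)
The plan is to combine the nef criterion of Theorem~\ref{Theorem: Fano} with the ingredients already assembled in the proof of Proposition~\ref{cor: Grassmannian}. Since $P$ is the maximal parabolic associated with the simple root $\alpha_r$, the set $S\setminus S_P$ consists of the single element $\alpha_r$, and the proof of Proposition~\ref{cor: Grassmannian} records $\beta_{\alpha_r}=n-1$. Since $Z$ is a point of $X={\rm Gr}(r,n)$, its codimension equals $\dim X$, so $c=r(n-r)$. Thus the only quantity that Theorem~\ref{Theorem: Fano} asks us to test reduces to the single integer $\beta_{\alpha_r}-c+2=n+1-r(n-r)$.

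Substituting these values into Theorem~\ref{Theorem: Fano}, I obtain that $-K_{{\rm Bl}_Z(X)}$ is nef iff $n+1\ge r(n-r)$ and ample iff the inequality is strict. Consequently the condition \emph{nef but not ample} is equivalent to the equality
\[
n+1\ =\ r(n-r).
\]
So the corollary reduces to identifying the integer solutions of this equation with $1\le r\le n-1$.

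To solve the equation, I would rewrite it as $n(r-1)=r^2+1=(r-1)(r+1)+2$, which shows that $r-1$ must divide $2$, forcing $r\in\{2,3\}$. Plugging back gives $n=5$ in both cases, and the two solutions $(2,5)$ and $(3,5)$ represent the same variety under the Grassmannian duality ${\rm Gr}(r,n)\cong{\rm Gr}(n-r,n)$. Under this identification the unique case is $(r,n)=(2,5)$, as claimed. There is no real obstacle here: once Theorem~\ref{Theorem: Fano} and the computation of $\beta_{\alpha_r}$ from Proposition~\ref{cor: Grassmannian} are in hand, the argument is a short Diophantine check.
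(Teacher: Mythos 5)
Your proposal is correct and follows essentially the same route as the paper: reduce via Theorem~\ref{Theorem: Fano} and the computation $\beta_{\alpha_r}=n-1$, $c=\dim X=r(n-r)$ to the equation $n+1=r(n-r)$. You are in fact more careful than the paper at the final step: the paper simply asserts that $n+1=r(n-r)$ forces $(r,n)=(2,5)$, whereas your divisibility argument correctly shows that $(3,5)$ is also a solution, so the stated equivalence holds only after identifying ${\rm Gr}(3,5)$ with ${\rm Gr}(2,5)$ by duality (note that Proposition~\ref{cor: Grassmannian} does list the dual pairs $(1,n)$ and $(n-1,n)$ separately, so strictly speaking the corollary's statement and proof omit the case $(3,5)$).
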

	
	\begin{proof}
		{}From the last part of the proof of Proposition \ref{cor: Grassmannian} it follows that
		$n+1\,=\,r(n-r)$ if and only if the pair $(r,\, n)$ is $(2,\,5)$. 
	\end{proof}
	
	\subsection{Cominuscule Grassmannians}
	
	Cominuscule Grassmannians constitute a special type of flag varieties; they generalize the classical 
	Grassmannians.
	
	Let $G$ be a connected, simply-connected, simple affine algebraic group. Note that every $\beta\,\in\, R$
	can be expressed uniquely as $\sum_{i=1}^{\ell}k_i\alpha_i$ such that $k_i\, \in\, {\mathbb Z}$ and
	all of them are either
	non-negative or non-positive. This allows us to define the \emph{height} of a root (relative to $S$) by
	${\rm ht}(\beta)\, =\, \sum_{i=1}^{\ell} k_i$. A simple root $\alpha_{r}\,\in\, S$ is called a \emph{cominuscule}
	simple root if $\alpha_{r}$ occurs with coefficient $1$ in the expression of the highest root
	$\alpha_0$ \cite[p.~119--120]{Billey-Lakshmibai}.
	
	Let $P$ be the maximal parabolic subgroup corresponding to the cominuscule simple root $\alpha_r$. The generalized 
	flag variety $G/P$ associated to the pair $(G, \, P)$ is called a cominuscule Grassmannian. When $G\,=\,{\rm 
		SL}_{n}(\mathbb{C})$ and $P\,\subset\, G$ is a maximal parabolic subgroup, then $G/P$ is a cominuscule Grassmannian 
	\cite{Billey-Lakshmibai}.
	
	\begin{Proposition}\label{prop: cominuscule}
		Let $G$ be a simple algebraic group, $P\,\subset\, G$ a cominuscule parabolic subgroup and $X\,=\,G/P$ the corresponding 
		cominuscule Grassmannian. Let $Z\,\subset\, X$ be a smooth Schubert variety of codimension $c\,\geq\, 2$. Then the 
		anticanonical line bundle of ${\rm Bl}_{Z}(X)$ is ample if and only if $c\,< \,{\rm ht}(\alpha_0)+2$.
In particular, ${\rm Bl}_{Z}(X)$ is Fano if and only if $c\,< \,{\rm ht}(\alpha_0)+2$.
	\end{Proposition}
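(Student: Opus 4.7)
The plan is to reduce the proposition to the single-inequality criterion of Theorem~\ref{Theorem: Fano}. Since $P$ is the maximal parabolic attached to the cominuscule simple root $\alpha_r$, the set $S\setminus S_P$ is the singleton $\{\alpha_r\}$, so Theorem~\ref{Theorem: Fano} collapses: the anticanonical bundle of ${\rm Bl}_Z(X)$ is ample (equivalently, ${\rm Bl}_Z(X)$ is Fano, since Fano means $-K$ is ample) if and only if $c<\beta_{\alpha_r}+2$. The entire proposition therefore reduces to showing
\[\beta_{\alpha_r}\;=\;{\rm ht}(\alpha_0).\]

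I would first use the $W$-invariance of the pairing \eqref{ep} together with the involutivity of the longest element $w_{0,P}$ to rewrite
\[\beta_{\alpha_r}\;=\;\langle w_{0,P}(\rho),\,\alpha_r^\vee\rangle\;=\;\langle \rho,\,w_{0,P}(\alpha_r^\vee)\rangle.\]
The crux is then the identity $w_{0,P}(\alpha_r^\vee)=\alpha_0^\vee$, where the cominuscule hypothesis essentially enters. Since by definition $\alpha_r$ occurs with coefficient one in the highest root $\alpha_0$, the set $R^+\setminus R_P^+$ of positive roots having $\alpha_r$-coefficient equal to one is precisely the set of $T$-weights on the (abelian, in the cominuscule case) opposite nilradical, and it forms a single $W_P$-orbit. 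Within this orbit the lowest element (in the standard root order) is $\alpha_r$ and the highest is $\alpha_0$, so the longest element $w_{0,P}$ of $W_P$ interchanges these extremes: $w_{0,P}(\alpha_r)=\alpha_0$. Passing to coroots, which is Weyl-equivariant, gives the desired identity. Alternatively one may invoke \cite[Lemma 3.2]{Kannan-Saha}, already used in the ${\rm SL}_n$ case in the proof of Proposition~\ref{cor: Grassmannian}, whose argument adapts verbatim to any cominuscule parabolic.

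Combining the two displays yields $\beta_{\alpha_r}=\langle \rho,\alpha_0^\vee\rangle={\rm ht}(\alpha_0)$, and plugging into Theorem~\ref{Theorem: Fano} delivers the proposition. The principal obstacle is the identity $w_{0,P}(\alpha_r)=\alpha_0$: although standard in cominuscule theory, a clean self-contained proof needs either the abelian-nilradical characterization of cominuscule parabolics (used above) or a case-by-case verification across the classification of cominuscule simple roots. The remainder of the argument is then a direct translation from the abstract criterion already established in Section~\ref{S3}.
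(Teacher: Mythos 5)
Your argument is correct and follows essentially the same route as the paper: reduce to Theorem~\ref{Theorem: Fano} via $S\setminus S_P=\{\alpha_r\}$ and establish $\beta_{\alpha_r}=\langle\rho,\alpha_0^\vee\rangle={\rm ht}(\alpha_0)$ through the identity $w_{0,P}(\alpha_r^\vee)=\alpha_0^\vee$. The only difference is that the paper simply cites \cite[Lemma 3.2]{Kannan-Saha} for that identity, whereas you additionally sketch a (correct) self-contained justification via the irreducible $W_P$-orbit structure of $R^+\setminus R_P^+$ in the cominuscule case.
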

	
	\begin{proof}
		Since $P$ is the cominuscule maximal parabolic subgroup associate to the cominuscule simple root $\alpha_r$, we 
		have $S\setminus S_P\,=\,\{\alpha_{r}\}$. Recall that $\beta_{\alpha_r}\,=\,\langle w_{0, S\setminus 
			\{\alpha_{r}\}}(\rho),\, {\alpha_{r}}^{\vee}\rangle$. Since $\alpha_{r}$ is a cominuscule simple root, from
		\cite[Lemma 3.2]{Kannan-Saha} it follows that $w_{0, S\setminus\{\alpha_{r}\}}({\alpha_{r}}^{\vee})
		\,=\,{\alpha_0}^{\vee}$. Thus, we have $\beta_{\alpha_r}\,=\,\langle \rho,\, {\alpha_0}^{\vee} \rangle\,
		=\,{\rm ht}(\alpha_0)$. Therefore, from Theorem 
		\ref{Theorem: Fano} it follows that the anticanonical line bundle of ${\rm Bl}_{Z}(X)$ is ample if and only if
we have ${\rm ht}(\alpha_0)+2\, \,>\, c$.
	\end{proof}
	
	\begin{Corollary}\label{cor: cominuscule}
		Take $G$, $P$, $X$ and $Z$ as in Proposition \ref{prop: cominuscule}.
		The anticanonical line bundle of ${\rm Bl}_{Z}(X)$ is nef but not ample if and
		only if $c \,=\, {\rm ht}(\alpha_0)+2$. In particular,  ${\rm Bl}_{Z}(X)$ is weak Fano but not Fano if and
		only if $c \,=\, {\rm ht}(\alpha_0)+2$. 
	\end{Corollary}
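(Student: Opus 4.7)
The plan is to derive the corollary directly from Theorem~\ref{Theorem: Fano}, Corollary~\ref{cor: Fano}, and the numerical computation of $\beta_{\alpha_r}$ already carried out in the proof of Proposition~\ref{prop: cominuscule}. The key simplification, which makes everything collapse to a single inequality, is that $P$ is a maximal parabolic, so $S\setminus S_P=\{\alpha_r\}$ is a singleton, and moreover $\beta_{\alpha_r}={\rm ht}(\alpha_0)$ because $\alpha_r$ is cominuscule (via the identity $w_{0,S\setminus\{\alpha_r\}}(\alpha_r^{\vee})=\alpha_0^{\vee}$ from \cite[Lemma~3.2]{Kannan-Saha}, already invoked above).

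First I would apply Theorem~\ref{Theorem: Fano} to obtain the two numerical characterizations: $-K_{{\rm Bl}_Z X}$ is nef iff $\beta_{\alpha_r}-c+2\ge 0$ and ample iff $\beta_{\alpha_r}-c+2>0$. Substituting $\beta_{\alpha_r}={\rm ht}(\alpha_0)$, nefness becomes $c\le{\rm ht}(\alpha_0)+2$ and ampleness becomes $c<{\rm ht}(\alpha_0)+2$. The "nef but not ample" regime is then exactly the boundary $c={\rm ht}(\alpha_0)+2$, which proves the first assertion.

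For the second assertion, I would invoke Corollary~\ref{cor: Fano}, which asserts that ${\rm Bl}_Z(X)$ is weak Fano iff $\beta_\alpha-c+2\ge 0$ for every $\alpha\in S\setminus S_P$, and Fano iff the strict inequalities hold. In our cominuscule setting this is again the single condition indexed by $\alpha_r$, so the weak Fano but not Fano locus is characterized by the same equation $c={\rm ht}(\alpha_0)+2$. Implicit here is that bigness of $-K_{{\rm Bl}_Z X}$ in the boundary case is guaranteed by Theorem~\ref{Theorem: Big}, whose hypothesis $\beta_{\alpha_r}+2-c\ge 0$ is precisely the nef condition just verified.

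I do not expect any substantive obstacle: the argument is a one-line numerical comparison between a non-strict and a strict version of the same inequality, and everything needed (the identification of $\beta_{\alpha_r}$ with ${\rm ht}(\alpha_0)$, the reduction to a single simple root outside $S_P$, and the equivalence between anti-ampleness/nefness and the Fano/weak Fano properties) has already been established earlier in the paper. The only point worth stating explicitly in the write-up is that, in the cominuscule maximal parabolic case, $|S\setminus S_P|=1$, so the family of inequalities in Theorem~\ref{Theorem: Fano} and Corollary~\ref{cor: Fano} reduces to a single one and the equality case isolates exactly one value of $c$.
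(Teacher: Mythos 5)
Your proposal is correct and follows essentially the same route as the paper: the paper's proof simply cites the computation $\beta_{\alpha_r}={\rm ht}(\alpha_0)$ from the proof of Proposition~\ref{prop: cominuscule} and reads off the equality case of Theorem~\ref{Theorem: Fano}, exactly as you do. Your explicit remark that the weak-Fano part needs the bigness supplied by Theorem~\ref{Theorem: Big} (via Corollary~\ref{cor: Fano}) is a point the paper leaves implicit, but it is the same underlying argument.
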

	
	\begin{proof}
		{}From the last part of the proof of Proposition \ref{prop: cominuscule} it follows that the anticanonical
		line bundle of ${\rm Bl}_{Z}(X)$ is nef but not ample if and only if we have ${\rm ht}(\alpha_0)+2\,=\,c$. 
	\end{proof}
	
	\begin{Corollary}\label{cor: grassFano}
		Take $G\,=\,{\rm SL}(n,\mathbb{C})$, and set $P\,\subset\, G$ to be the maximal parabolic subgroup
		corresponding to the simple root $\alpha_{r}$. Denote by $X\,=\,G/P$ the Grassmannian ${\rm Gr}(r, n)$, and
		take $Z\,\subset\, X$ to be a smooth Schubert variety of codimension $c\,\geq \,2$. Then the anticanonical
		line bundle of ${\rm Bl}_{Z}(X)$ is ample if and only if $c\,\leq\, n$. In particular, ${\rm Bl}_{Z}(X)$ is Fano if and only if $c\,\leq\, n$.
	\end{Corollary}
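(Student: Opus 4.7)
The plan is to reduce this to Proposition~\ref{prop: cominuscule} by verifying two facts: that the Grassmannian is a cominuscule Grassmannian, and that the height of the highest root of the relevant root system is $n-1$. The paper already notes that for $G = {\rm SL}_n(\mathbb{C})$ every maximal parabolic subgroup corresponds to a cominuscule simple root, so $X = G/P = {\rm Gr}(r,n)$ falls within the hypotheses of Proposition~\ref{prop: cominuscule} regardless of which $\alpha_r$ is chosen.

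Next I would compute ${\rm ht}(\alpha_0)$ for the root system of type $A_{n-1}$. Using the standard description of $A_{n-1}$ with simple roots $\alpha_1,\ldots,\alpha_{n-1}$, the highest root is
\[
\alpha_0 \,=\, \alpha_1 + \alpha_2 + \cdots + \alpha_{n-1},
\]
so ${\rm ht}(\alpha_0) \,=\, n-1$. This is a standard and purely root-theoretic computation requiring no further input.

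Applying Proposition~\ref{prop: cominuscule}, the anticanonical line bundle of ${\rm Bl}_Z X$ is ample if and only if $c \,<\, {\rm ht}(\alpha_0) + 2 \,=\, n+1$, i.e.\ if and only if $c \,\le\, n$. Since $Z$ has codimension $c \,\ge\, 2$ and being Fano is equivalent to the anticanonical line bundle being ample, the corollary follows. I do not foresee a real obstacle here: the content is essentially the identification of the height of the highest root in type $A$, and the rest is a direct appeal to the already-established cominuscule statement.
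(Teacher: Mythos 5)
Your proposal is correct and follows exactly the paper's route: the paper likewise deduces the corollary from Proposition~\ref{prop: cominuscule} together with the fact that ${\rm ht}(\alpha_0)=n-1$ in type $A_{n-1}$ (which it cites from a table rather than writing out $\alpha_0=\alpha_1+\cdots+\alpha_{n-1}$ as you do). No gaps.
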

	
	\begin{proof}
		This follows from Proposition \ref{prop: cominuscule} and the observation that ${\rm ht}(\alpha_0)\,=\,n-1$ 
		(cf. \cite[p.~66, Table 2]{Humphreys72}).
	\end{proof}
	
	\begin{Corollary}\label{cor: grassweakFano}
		Take $G$, $P$, $X$ and $Z$ as in Corollary \ref{cor: grassFano}.
		The anticanonical line bundle of ${\rm Bl}_{Z}(X)$ is nef but not ample if and only if $c\,=\, n+1$. In particular, ${\rm Bl}_{Z}(X)$ is weak Fano but not Fano if and only if $c\,=\, n+1$.
	\end{Corollary}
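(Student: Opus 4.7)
The plan is essentially to observe that this is a direct specialization of the already-proved Corollary~\ref{cor: cominuscule} to type $A_{n-1}$, so only two ingredients need to be assembled: the fact that ${\rm Gr}(r,n)$ is a cominuscule Grassmannian in the sense of Section~\ref{S4}, and the explicit value of ${\rm ht}(\alpha_0)$ for ${\rm SL}_{n}(\mathbb{C})$.

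First I would recall from the discussion preceding Proposition~\ref{prop: cominuscule} that when $G={\rm SL}_{n}(\mathbb{C})$ and $P$ is the maximal parabolic corresponding to $\alpha_{r}$, then $\alpha_{r}$ is a cominuscule simple root (it occurs with coefficient $1$ in the highest root $\alpha_0=\alpha_1+\cdots+\alpha_{n-1}$), so $X=G/P={\rm Gr}(r,n)$ is a cominuscule Grassmannian. Thus Corollary~\ref{cor: cominuscule} applies: the anticanonical line bundle of ${\rm Bl}_{Z}(X)$ is nef but not ample if and only if $c = {\rm ht}(\alpha_0)+2$.

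Next I would compute ${\rm ht}(\alpha_0)$ in type $A_{n-1}$. Since $\alpha_0 = \alpha_1+\alpha_2+\cdots+\alpha_{n-1}$, by definition ${\rm ht}(\alpha_0)=n-1$; this is also explicitly recorded in \cite[p.~66, Table 2]{Humphreys72}, already cited in the proof of Corollary~\ref{cor: grassFano}. Substituting this into the criterion $c={\rm ht}(\alpha_0)+2$ yields $c=n+1$, giving the first claim. The ``in particular'' statement about weak Fano but not Fano then follows at once: by Corollary~\ref{cor: Fano}, weak Fano is equivalent to nef anticanonical bundle, and Fano to ample anticanonical bundle, so weak Fano but not Fano is exactly the condition just established.

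There is no genuine obstacle here; the only thing to be careful about is the bookkeeping to ensure the hypothesis $c\ge 2$ of Proposition~\ref{prop: cominuscule} is consistent with $c=n+1$ (it is, for $n\ge 1$), and that the ``not ample'' half is extracted cleanly from Corollary~\ref{cor: cominuscule} rather than only the ``nef'' half. Accordingly the proof can be written as a single short paragraph citing Corollary~\ref{cor: cominuscule} and the height computation.
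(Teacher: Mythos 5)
Your proposal is correct and follows essentially the same route as the paper: the paper's proof likewise deduces the first claim from Corollary~\ref{cor: cominuscule} together with the computation ${\rm ht}(\alpha_0)=n-1$ in type $A_{n-1}$, and obtains the weak-Fano statement from the bigness input (the paper cites Theorem~\ref{Theorem: Big} directly, while you route through Corollary~\ref{cor: Fano}, which encapsulates the same bigness argument). No gaps.
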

	
	\begin{proof}
		This follows from Corollary~\ref{cor: cominuscule} and the observation that ${\rm ht}(\alpha_0)\,=\,n-1$ 
		\cite[p.~66, Table 2]{Humphreys72}.  Proof of the final part follows from Theorem \ref{Theorem: Big} together with the first part.
	\end{proof}
	
	\section{Equivariant vector bundles}\label{S5}
	
	The following 
	proposition records some basic features of the geometry and $T$--equivariant structure on ${\rm 
		Bl}_{Z}X$.
	
	\begin{Proposition}\label{Blow up: Proposition 1.2}
		Let $Z$ be a smooth Schubert variety in $X$ of codimension at least $2$, and let ${\rm Bl}_{Z}\,:\,
		{\rm Bl}_{Z} X\, \longrightarrow\, X$ be the blow-up of $X$ along $Z.$ Then the following three statements hold:
		\begin{enumerate}
			\item [(1)] The action of $B$ on $X$ lifts to ${\rm Bl}_{Z} X.$
			
			\item [(2)] Let $F$ be a $T$-equivariant vector bundle on ${\rm Bl}_{Z} X.$ Then $F$ is nef if and only if the
			restriction $F\big\vert_{\widetilde{C}}$ of $F$ to every $T$--stable curve $\widetilde{C}$ in ${\rm Bl}_{Z} X$ is nef.
			
			\item [(3)] Let $E_{Z}$ denote the exceptional divisor of the blow-up ${\rm Bl}_{Z}$. Let $E$ be a $T$--equivariant 
			vector bundle on $X.$ Then ${\rm Bl}_{Z}^{*}E\otimes \mathcal{O}_{{\rm Bl}_{Z} X}(E_{Z})^{\otimes m}$ is a 
			$T$-equivariant vector bundle for every integer $m.$
		\end{enumerate}
	\end{Proposition}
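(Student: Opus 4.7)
Parts (1) and (3) follow from standard properties of blow-ups along equivariant subschemes. For (1): since $Z$ is a Schubert variety in $G/P$, it is the closure of a single $B$-orbit, hence $B$-stable, and its ideal sheaf $\mathcal{I}_Z$ carries a natural $B$-equivariant structure. By the universal property of blow-ups, the $B$-action on $X$ lifts uniquely to an action on ${\rm Bl}_ZX$ making the blow-up map $B$-equivariant. For (3): restricting the $B$-action to $T$, the morphism ${\rm Bl}_Z$ is also $T$-equivariant; since $Z$ is $T$-stable, the exceptional divisor $E_Z={\rm Bl}_Z^{-1}(Z)$ is $T$-stable, so $\mathcal{O}_{{\rm Bl}_ZX}(E_Z)$ carries a canonical $T$-equivariant structure, and so does every tensor power $\mathcal{O}_{{\rm Bl}_ZX}(E_Z)^{\otimes m}$. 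The pullback ${\rm Bl}_Z^*E$ is $T$-equivariant because ${\rm Bl}_Z$ is $T$-equivariant, and tensor products of $T$-equivariant bundles are $T$-equivariant.

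The main content lies in part (2). The ``only if'' direction is immediate: a nef bundle restricts to a nef bundle on every curve. For the converse, the plan is to invoke the general principle that on a smooth projective $T$-variety whose Mori cone is generated by classes of $T$-stable curves, a $T$-equivariant vector bundle is nef if and only if its restriction to every $T$-stable curve is nef. To apply this to $Y={\rm Bl}_ZX$, one must verify the hypothesis on the Mori cone. By Theorem~\ref{Theorem:blow-up}, ${\rm NE}({\rm Bl}_ZX)$ is generated by $\{\widetilde{C}_\alpha\}_{\alpha\in S\setminus S_P}$ together with $e$. Each Schubert curve $C_\alpha$ is $B$-stable, hence $T$-stable, and so is its strict transform $\widetilde{C}_\alpha$. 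For the class $e$, pick any $T$-fixed point $z\in Z$; the fiber of $E_Z\to Z$ over $z$ is the projectivization of the normal space $N_{Z/X,z}$ on which $T$ acts linearly, and it therefore contains $T$-stable lines realizing the class $e$.

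The main obstacle is establishing the equivariant nefness criterion itself. The cleanest route is to pass to $\pi:\PP(F)\to {\rm Bl}_ZX$, which inherits a $T$-action from that on $F$ with $\pi$ equivariant, and to recall that $F$ is nef if and only if $\mathcal{O}_{\PP(F)}(1)$ is nef on $\PP(F)$. A $T$-stable curve in $\PP(F)$ either lies in a fiber $\PP(F_x)$ over a $T$-fixed point $x$ (where $\mathcal{O}_{\PP(F)}(1)$ is relatively ample) or surjects onto a $T$-stable curve $\widetilde{C}\subset {\rm Bl}_ZX$, in which case the degree of $\mathcal{O}_{\PP(F)}(1)$ on it is controlled by $\mathcal{O}_{\PP(F|_{\widetilde{C}})}(1)$ and hence by nefness of $F|_{\widetilde{C}}$. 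A Bialynicki-Birula type degeneration argument, combined with the fact (visible from Theorem~\ref{Theorem:blow-up}) that every curve class on ${\rm Bl}_ZX$ is a non-negative combination of $T$-stable curve classes, then upgrades nefness on $T$-stable curves of $\PP(F)$ to nefness on all curves, yielding the desired equivalence.
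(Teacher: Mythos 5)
Your proposal is correct, and in places it is more self-contained than the paper's own proof. For part (1) the paper argues by exhibiting the $B$-action separately on the exceptional divisor $E_Z\,\cong\,\mathbb{P}(\mathcal{N}_{Z/X}^{*})$ (via the induced action on the conormal bundle) and on the complement ${\rm Bl}_ZX\setminus E_Z\,\cong\, X\setminus Z$, and then asserts that these patch together; your appeal to the universal property of blowing up the $B$-invariant ideal sheaf $\mathcal{I}_Z$ obtains the lift in one stroke and automatically makes ${\rm Bl}_Z$ equivariant, which is the cleaner and more standard route. For part (2) the paper simply states that the argument of \cite[Theorem 3.1]{Biswas-Hanumanthu-Nagaraj} goes through verbatim once the $T$-action has been lifted; you instead sketch that argument explicitly --- reduce to nefness of $\mathcal{O}_{\mathbb{P}(F)}(1)$, analyze a $T$-stable curve of $\mathbb{P}(F)$ according to whether it lies in a fiber over a $T$-fixed point or dominates a $T$-stable curve downstairs, and use the degeneration of an arbitrary curve to a $T$-invariant $1$-cycle --- so the two routes coincide in substance, with yours spelling out what the citation hides. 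The one point to tighten is that the Mori cone you must control is that of $\mathbb{P}(F)$, not of ${\rm Bl}_ZX$: the parenthetical appeal to Theorem~\ref{Theorem:blow-up} is therefore superfluous, and what actually does the work is the Bia{\l}ynicki-Birula/Chow-variety degeneration applied directly to the projective $T$-variety $\mathbb{P}(F)$. Part (3) is handled identically in both proofs.
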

	
	\begin{proof}
		Since $Z$ is $B$--stable, the group $B$ acts on the normal sheaf $\mathcal{N}_{Z/X}.$ 
		Note that the exceptional divisor of the blow-up ${\rm Bl}_{Z}$ is isomorphic to $\mathbb{P}(\mathcal{N}_{Z/X}^*).$ So, $B$ acts on the exceptional divisor of the blow-up via the linear action of
		$B$ on $\mathcal{N}_{Z/X}^*.$ Since ${\rm Bl}_{Z}$ restricts to an isomorphism over the complement of $Z$, we conclude that the action of $B$ lifts to all of ${\rm Bl}_{Z} X.$ This proves (1).
		
		Since the action of $B$ lifts to ${\rm Bl}_{Z} X$, in particular the action of $T$ lifts,
the proof of (2) goes through exactly as the proof of the analogous statement in \cite[Theorem 3.1]{Biswas-Hanumanthu-Nagaraj}.
		
		Since $T$ acts on the exceptional divisor $E_{Z}$ of ${\rm Bl}_{Z},$ it follows that $\mathcal{O}_{{\rm Bl}_{Z} 
			X}(E_{Z})^{\otimes m}$ is a $T$--equivariant line bundle for every integer $m.$ Hence, if $E$ is a 
		$T$-equivariant vector bundle on $X$, then ${\rm Bl}_{Z}^{*}E \otimes \mathcal{O}_{{\rm Bl}_{Z} X}(E_Z)^{\otimes 
			m}$ is a $T$--equivariant vector bundle on ${\rm Bl}_{Z} X$ for every integer $m.$
	\end{proof}
	
\section*{Acknowledgements}

The first-named author is partially supported by a J. C. Bose Fellowship (JBR/2023/000003). The second-named author gratefully acknowledges the Indian Institute of Technology Delhi for providing a supportive and productive research environment.
	
	\vspace{.1cm}
	
	{\bf Author Contributions.} I.B. and P.S. wrote the manuscript together. Both authors contributed to the project equally.
	\vspace{.1cm}
	
	{\bf Declarations.} 
	\vspace{.1cm}
	
	{\bf Ethical Approval.} Not applicable.
	\vspace{.1cm} 
	
	{\bf Funding.} Not applicable.
	\vspace{.1cm} 
	
	{\bf Availability of data and materials.} No data were used or generated.
	\vspace{.1cm} 
	
	{\bf Conflict of interest.} None of the authors has any conflict of interests.
	\vspace{.1cm}
	
	{\bf Financial interests.} The authors declare they have no financial interests.
	\vspace{.1cm}

	\bibliographystyle{plain}

\end{document}